\newtheorem{theorem}{Theorem}
\newtheorem{corollary}[theorem]{Corollary}
\newtheorem{lemma}[theorem]{Lemma}
\newtheorem{definition}[theorem]{Definition}
\newtheorem{remark}[theorem]{Remark}
\newtheorem{example}[theorem]{Example}
\newenvironment{keywords}{\begin{center}
\begin{minipage}[c]{13.4cm} {\bf Keywords:}} {\end{minipage}
\end{center}}
\newenvironment{msc}{\begin{center}
\begin{minipage}[c]{13.4cm} {\bf MSC 2010:}} {\end{minipage}
\end{center}}
\begin{document}

\title{Fractional variational problems\\
depending on indefinite integrals\thanks{Part
of the second author's Ph.D., which is carried out
at the University of Aveiro under the
Doctoral Program in \emph{Mathematics and Applications}
(PDMA) of Universities of Aveiro and Minho.
Submitted 29-Dec-2010; revised 14-Feb-2011; accepted 16-Feb-2011; for publication
in \emph{Nonlinear Analysis Series A: Theory, Methods \& Applications}.}}

\author{Ricardo Almeida\\
\texttt{ricardo.almeida@ua.pt}
\and Shakoor Pooseh\\
\texttt{spooseh@ua.pt}
\and Delfim F. M. Torres\\
\texttt{delfim@ua.pt}}

\date{Department of Mathematics,
University of Aveiro,
3810-193 Aveiro, Portugal}

\maketitle


\begin{abstract}
We obtain necessary optimality conditions
for variational problems with a Lagrangian
depending on a Caputo fractional derivative, a fractional and
an indefinite integral. Main results give fractional
Euler--Lagrange type equations and natural boundary conditions,
which provide a generalization of previous results found in the literature.
Isoperimetric problems, problems with holonomic constraints and
depending on higher-order Caputo derivatives,
as well as fractional Lagrange problems, are considered.
\end{abstract}

\begin{msc}
49K05, 49S05, 26A33, 34A08.
\end{msc}

\begin{keywords}
calculus of variations,
fractional calculus,
Caputo derivatives,
fractional necessary optimality equations.
\end{keywords}


\section{Introduction}

In the 18th century, Euler considered the problem of optimizing functionals
depending not only on some unknown function $y$ and some derivative of $y$,
but also on an antiderivative of $y$ (see \cite{fraser}). Similar problems have been
recently investigated in \cite{Gregory}, where Lagrangians containing
higher-order derivatives and optimal control problems are considered.
More generally, it has been shown that the results of \cite{Gregory}
hold on an arbitrary time scale \cite{Nat}.
Here we study such problems within the framework of fractional calculus.

Roughly speaking, a fractional calculus defines integrals
and derivatives of non-integer order.
Let $\alpha>0$ be a real number
and $n\in\mathbb{N}$ be such that $n-1<\alpha<n$.
Here we follow \cite{Almeida1} and \cite{Kilbas,Miller}.
Let $f:[a,b]\to\mathbb{R}$ be piecewise continuous
on $(a,b)$ and integrable on $[a,b]$.
The left and right Riemann--Liouville fractional integrals of $f$
of order $\alpha$ are defined respectively by
$$
{_aI_x^\alpha}f(x)=\frac{1}{\Gamma(\alpha)}\int_a^x (x-t)^{\alpha-1}f(t)dt
\quad \mbox{and} \quad
{_xI_b^\alpha}f(x)=\frac{1}{\Gamma(\alpha)}\int_x^b(t-x)^{\alpha-1} f(t)dt.
$$
Here $\Gamma$ is the well-known Gamma function. Then
the left ${_aD_x^\alpha}$ and right ${_xD_b^\alpha}$
Riemann--Liouville fractional derivatives of $f$
of order $\alpha$ are defined (if they exist) as
\begin{equation}
\label{ded:RL:left}
{_aD_x^\alpha}f(x)=\frac{1}{\Gamma(n-\alpha)}\frac{d^n}{dx^n}
\int_a^x(x-t)^{n-\alpha-1}f(t)dt
\end{equation}
and
\begin{equation}
\label{ded:RL:right}
{_xD_b^\alpha}f(x)=\frac{(-1)^n}{\Gamma(n-\alpha)}\frac{d^n}{dx^n}\int_x^b (t-x)^{n-\alpha-1} f(t)dt.
\end{equation}
The fractional derivatives \eqref{ded:RL:left} and \eqref{ded:RL:right}
have one disadvantage when modeling real world phenomena:
the fractional derivative of a constant is not zero.
To eliminate this problem, one often
considers fractional derivatives in the sense of Caputo.
Let $f$ belong to the space $AC^n([a,b];\mathbb{R})$
of absolutely continuous functions.
The left and right Caputo fractional
derivatives of $f$ of order $\alpha$ are defined respectively by
$$
{_a^CD_x^\alpha}f(x)
=\frac{1}{\Gamma(n-\alpha)}\int_a^x (x-t)^{n-\alpha-1}f^{(n)}(t)dt
$$
and
$$
{_x^CD_b^\alpha}f(x)
=\frac{1}{\Gamma(n-\alpha)}\int_x^b(-1)^n(t-x)^{n-\alpha-1} f^{(n)}(t)dt.
$$
These fractional integrals and derivatives define a rich calculus.
For details see the books \cite{Kilbas,Miller,samko}. Here we just recall
a useful property for our purposes: integration by parts.
For fractional integrals,
$$
\displaystyle\int_{a}^{b}  g(x) \cdot {_aI_x^\alpha}f(x)dx
=\int_a^b f(x) \cdot {_x I_b^\alpha} g(x)dx
$$
(see, \textrm{e.g.}, \cite[Lemma~2.7]{Kilbas}),
and for Caputo fractional derivatives
$$
\displaystyle\int_{a}^{b} g(x)\cdot {_a^C D_x^\alpha}f(x)dx
=\displaystyle\int_a^b f(x)\cdot {_x D_b^\alpha} g(x)dx+\sum_{j=0}^{n-1}
\left[{_xD_b^{\alpha+j-n}}g(x) \cdot f^{(n-1-j)}(x)\right]_a^b
$$
(see, \textrm{e.g.}, \cite[Eq. (16)]{MR2345467}).
In particular, for $\alpha\in(0,1)$ one has
\begin{equation}
\label{eq:frac:IBP}
\int_{a}^{b}g(x)\cdot {_a^C D_x^\alpha}f(x)dx
=\int_a^b f(x)\cdot {_x D_b^\alpha} g(x)dx
+\left[{_xI_b^{1-\alpha}}g(x) \cdot f(x)\right]_a^b.
\end{equation}
When $\alpha \rightarrow 1$, ${_a^C D_x^\alpha} = \frac{d}{dx}$,
${_x D_b^\alpha} = - \frac{d}{dx}$, ${_xI_b^{1-\alpha}}$
is the identity operator, and \eqref{eq:frac:IBP} gives
the classical formula of integration by parts.

The fractional calculus of variations concerns finding extremizers
for variational functionals depending on fractional derivatives
instead of integer ones. The theory started in 1996 with
the work of Riewe, in order to better describe non-conservative systems
in mechanics \cite{CD:Riewe:1996,CD:Riewe:1997}.
The subject is now under strong development due
to its many applications in physics and engineering,
providing more accurate models of physical phenomena (see, \textrm{e.g.},
\cite{MyID:182,MyID:154,MyID:179,El-Nabulsi1,El-Nabulsi2,MyID:191,gastao:delfim,gasta1,MyID:181,MyID:203}).
With respect to results on fractional variational calculus via Caputo operators,
we refer the reader to \cite{AGRA1,Ankara:Ric,Almeida,Gastao,Malinowska,MyID:163,MyID:207}
and references therein.

Our main contribution is an extension of the results presented
in \cite{AGRA1,Gregory} by considering Lagrangians containing an antiderivative,
that in turn depend on the unknown function, a fractional integral, and a Caputo
fractional derivative (Section~\ref{sec:Fundprob}).
Transversality conditions are studied in Section~\ref{sec:natbound},
where the variational functional $J$ depends also on the terminal time $T$,
$J(y,T)$, and where we obtain conditions for a pair $(y,T)$ to be an optimal
solution to the problem. In Section~\ref{sec:IsoProb}
we consider isoperimetric problems with integral constraints
of the same type as the cost functionals considered in Section~\ref{sec:Fundprob}.
Fractional problems with holonomic constraints are considered in Section~\ref{sec:Holonomic}.
The situation when the Lagrangian depends on higher
order Caputo derivatives, \textrm{i.e.}, it depends on
$^C_aD_x^{\alpha_k}y(x)$ for $\alpha_k\in(k-1,k)$, $k\in\{1,\ldots,n\}$,
is studied in Section~\ref{sec:Higher}, while Section~\ref{sec:FracOpt}
considers fractional Lagrange problems and the Hamiltonian
approach. In Section~\ref{sec:SufConditions}
we obtain sufficient conditions of optimization
under suitable convexity assumptions
on the Lagrangian. We end with Section~\ref{sec:NumSim},
discussing a numerical scheme for solving
the proposed fractional variational problems.
The idea is to approximate fractional problems
by classical ones. Numerical results for
two illustrative examples are described in detail.


\section{The fundamental problem}
\label{sec:Fundprob}

Let $\alpha\in(0,1)$ and $\beta>0$.
The problem that we address is stated in the following way.
Minimize the cost functional
\begin{equation}
\label{funct}
J(y)=\int_a^b L(x,y(x),{^C_aD_x^\alpha}y(x),{_aI_x^\beta}y(x),z(x))dx,
\end{equation}
where the variable $z$ is defined by
$$
z(x)=\int_a^x l(t,y(t),{^C_aD_t^\alpha}y(t),{_aI_t^\beta}y(t))dt,
$$
subject to the boundary conditions
\begin{equation}
\label{bound}
y(a)=y_a \quad \mbox{and} \quad y(b)=y_b.
\end{equation}
We assume that the functions $(x,y,v,w,z)\to L(x,y,v,w,z)$
and  $(x,y,v,w)\to l(x,y,v,w)$ are of class $C^1$, and the trajectories
$y:[a,b]\to\mathbb{R}$ are absolute continuous functions, $y \in AC([a,b];\mathbb{R})$,
such that ${^C_aD_x^\alpha}y(x)$ and ${_aI_x^\beta}y(x)$ exist and are continuous on $[a,b]$.
We denote such class of functions by $\mathcal{F}([a,b];\mathbb{R})$.
Also, to simplify, by $[\cdot]$ and $\{\cdot\}$ we denote the operators
$$
[y](x)=(x,y(x),{^C_aD_x^\alpha}y(x),{_aI_x^\beta}y(x),z(x))
\quad \mbox{and}\quad \{y\}(x)=(x,y(x),{^C_aD_x^\alpha}y(x),{_aI_x^\beta}y(x)).
$$

\begin{theorem}
\label{ELTEo}
Let $y \in \mathcal{F}([a,b];\mathbb{R})$ be a minimizer
of $J$ as in \eqref{funct},
subject to the boundary conditions \eqref{bound}.
Then, for all $x\in[a,b]$, $y$ is a solution
of the fractional equation
\begin{multline}
\label{ELeq}
\frac{\partial L}{\partial y}[y](x)
+{_xD^\alpha_b}\left( \frac{\partial L}{\partial v}[y](x) \right)
+{_xI_b^\beta}\left(\frac{\partial L}{\partial w}[y](x)\right)
+\int_x^b \frac{\partial L}{\partial z}[y](t)dt\cdot \frac{\partial l}{\partial y}\{y\}(x)\\
+{_xD^\alpha_b}\left( \int_x^b \frac{\partial L}{\partial z}[y](t)dt
\cdot \frac{\partial l}{\partial v}\{y\}(x)  \right)
+{_xI^\beta_b}\left( \int_x^b \frac{\partial L}{\partial z}[y](t)dt
\cdot \frac{\partial l}{\partial w}\{y\}(x)  \right)=0.
\end{multline}
\end{theorem}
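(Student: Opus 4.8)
The plan is to run the classical first-variation argument, adapted to the fractional operators at hand. Let $y$ be a minimizer and let $h\in\mathcal{F}([a,b];\mathbb{R})$ be an arbitrary variation with $h(a)=h(b)=0$, so that $y+\epsilon h$ is admissible (it satisfies \eqref{bound} and stays in $\mathcal{F}$, by linearity of ${^C_aD_x^\alpha}$ and ${_aI_x^\beta}$ and since $AC([a,b];\mathbb{R})$ is a vector space) for every real $\epsilon$. Set $j(\epsilon)=J(y+\epsilon h)$; since $y$ minimizes $J$, the function $j$ has a minimum at $\epsilon=0$, hence $j'(0)=0$. First I would compute $j'(0)$ by differentiating under the integral sign — legitimate thanks to the $C^1$ hypotheses on $L$ and $l$ and the continuity built into $\mathcal{F}$. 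Writing $z_\epsilon(x)=\int_a^x l\{y+\epsilon h\}(t)\,dt$ and $\eta(x)=\frac{d}{d\epsilon}z_\epsilon(x)\big|_{\epsilon=0}$, the chain rule gives
$$
j'(0)=\int_a^b\left[\frac{\partial L}{\partial y}[y]\,h+\frac{\partial L}{\partial v}[y]\,{^C_aD_x^\alpha}h+\frac{\partial L}{\partial w}[y]\,{_aI_x^\beta}h+\frac{\partial L}{\partial z}[y]\,\eta\right]dx,
$$
where
$$
\eta(x)=\int_a^x\left[\frac{\partial l}{\partial y}\{y\}(t)\,h(t)+\frac{\partial l}{\partial v}\{y\}(t)\,{^C_aD_t^\alpha}h(t)+\frac{\partial l}{\partial w}\{y\}(t)\,{_aI_t^\beta}h(t)\right]dt.
$$

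The next step is to absorb the last summand of $j'(0)$. It is an iterated integral of the shape $\int_a^b\frac{\partial L}{\partial z}[y](x)\bigl(\int_a^x g(t)\,dt\bigr)dx$, and interchanging the order of integration (Dirichlet's formula) rewrites it as $\int_a^b\bigl(\int_x^b\frac{\partial L}{\partial z}[y](t)\,dt\bigr)g(x)\,dx$; this is exactly the manipulation that produces the factors $\int_x^b\frac{\partial L}{\partial z}[y](t)\,dt$ in \eqref{ELeq}. Collecting the coefficients of $h$, ${^C_aD_x^\alpha}h$ and ${_aI_x^\beta}h$, the identity $j'(0)=0$ becomes $\int_a^b\bigl[A\,h+B\,{^C_aD_x^\alpha}h+C\,{_aI_x^\beta}h\bigr]\,dx=0$, with
$$
A=\frac{\partial L}{\partial y}[y]+\int_x^b\frac{\partial L}{\partial z}[y](t)\,dt\cdot\frac{\partial l}{\partial y}\{y\},\qquad B=\frac{\partial L}{\partial v}[y]+\int_x^b\frac{\partial L}{\partial z}[y](t)\,dt\cdot\frac{\partial l}{\partial v}\{y\},
$$
and $C$ obtained from $B$ by replacing $\partial/\partial v$ with $\partial/\partial w$.

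Then I would transfer the fractional operators off $h$. Applying the Caputo integration-by-parts formula \eqref{eq:frac:IBP} to $\int_a^b B\,{^C_aD_x^\alpha}h\,dx$ turns it into $\int_a^b h\,{_xD^\alpha_b}B\,dx+\bigl[{_xI_b^{1-\alpha}}B(x)\cdot h(x)\bigr]_a^b$, and the boundary term vanishes since $h(a)=h(b)=0$; applying the integration-by-parts rule for Riemann--Liouville fractional integrals (recalled in the Introduction) to $\int_a^b C\,{_aI_x^\beta}h\,dx$ turns it into $\int_a^b h\,{_xI_b^\beta}C\,dx$ with no boundary term. Hence $\int_a^b h(x)\,[A(x)+{_xD^\alpha_b}B(x)+{_xI_b^\beta}C(x)]\,dx=0$ for every admissible $h$. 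Since the bracket is continuous on $[a,b]$ (from the $C^1$ assumptions and the definition of $\mathcal{F}$), the fundamental lemma of the calculus of variations forces $A+{_xD^\alpha_b}B+{_xI_b^\beta}C=0$ on $[a,b]$; expanding $A$, $B$, $C$ and using linearity of ${_xD^\alpha_b}$ and ${_xI_b^\beta}$ gives precisely \eqref{ELeq}.

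The main obstacle is essentially bookkeeping rather than a deep difficulty: one must carefully compute the perturbation $\eta$ of the indefinite integral $z$, correctly execute the Fubini/Dirichlet interchange that converts $\int_a^x$ into $\int_x^b$, and verify that the regularity needed to differentiate $j$ under the integral sign and to invoke the fundamental lemma is indeed guaranteed by the $C^1$ hypotheses together with the defining properties of the class $\mathcal{F}([a,b];\mathbb{R})$.
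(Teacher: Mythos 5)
Your proposal is correct and follows essentially the same route as the paper's proof: first variation of $j(\epsilon)=J(y+\epsilon h)$, conversion of the iterated integral so that the factor $\int_x^b \frac{\partial L}{\partial z}[y](t)\,dt$ appears, fractional integration by parts with vanishing boundary terms, and the fundamental lemma. The only cosmetic difference is that you perform the interchange via Fubini/Dirichlet and group the coefficients of ${^C_aD_x^\alpha}h$ and ${_aI_x^\beta}h$ before integrating by parts, whereas the paper writes $\frac{\partial L}{\partial z}[y](x)=-\frac{d}{dx}\int_x^b\frac{\partial L}{\partial z}[y](t)\,dt$ and integrates by parts classically, handling each term separately; the two manipulations are equivalent.
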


\begin{proof}
Let $h\in \mathcal{F}([a,b];\mathbb{R})$
be such that $h(a)=0=h(b)$, and $\epsilon$
be a real number with $|\epsilon| \ll 1$. If we define $j$ as
$j(\epsilon)=J(y+\epsilon h)$, then $j'(0)=0$.
Differentiating $j$ at $\epsilon=0$, we get
\begin{multline*}
\int_a^b \left[ \frac{\partial L}{\partial y}[y](x)h(x)
+ \frac{\partial L}{\partial v}[y](x){^C_aD^\alpha_x}h(x)
+ \frac{\partial L}{\partial w}[y](x){_aI^\beta_x}h(x)\right.\\
\left.+\frac{\partial L}{\partial z}[y](x)\int_a^x\left(
\frac{\partial l}{\partial y}\{y\}(t)h(t)
+\frac{\partial l}{\partial v}\{y\}(t){^C_aD^\alpha_t}h(t)
+\frac{\partial l}{\partial w}\{y\}(t){_aI^\beta_t}h(t)\right)dt\right]dx=0.
\end{multline*}
The necessary condition \eqref{ELeq} follows
from the next relations and the fundamental lemma
of the calculus of variations
(\textrm{cf.}, \textrm{e.g.}, \cite[p.~32]{Brunt}):
$$
\int_a^b \frac{\partial L}{\partial v}[y](x){^C_aD^\alpha_x}h(x) dx
=\int_a^b {_x D_b^\alpha} \left(\frac{\partial L}{\partial v}[y](x) \right)h(x)dx
+ \left[{_xI_b^{1-\alpha}}\left(\frac{\partial L}{\partial v}[y](x) \right) h(x)\right]_a^b,
$$
$$
\int_a^b \frac{\partial L}{\partial w}[y](x){_aI^\beta_x}h(x) dx
=\int_a^b {_x I_b^\beta} \left(\frac{\partial L}{\partial w}[y](x) \right)h(x)dx,
$$
\begin{align*}
\int_a^b & \frac{\partial L}{\partial z}[y](x)\left(\int_a^x
\frac{\partial l}{\partial y}\{y\}(t)h(t) dt \right) dx
=\int_a^b \left( -\frac{d}{dx}\int_x^b\frac{\partial L}{\partial z}[y](t)dt \right)
\left( \int_a^x \frac{\partial l}{\partial y}\{y\}(t)h(t) dt \right) dx\\
&= \left[- \left(\int_x^b\frac{\partial L}{\partial z}[y](t)dt \right)\left(\int_a^x
\frac{\partial l}{\partial y}\{y\}(t)h(t) dt\right)  \right]_a^b
+\int_a^b \left(\int_x^b\frac{\partial L}{\partial z}[y](t)dt \right)
\frac{\partial l}{\partial y}\{y\}(x)h(x) \, dx\\
&= \int_a^b \left(\int_x^b\frac{\partial L}{\partial z}[y](t)dt \right)
\frac{\partial l}{\partial y}\{y\}(x)h(x) \, dx,
\end{align*}
\begin{align*}
\int_a^b & \frac{\partial L}{\partial z}[y](x)\left(\int_a^x
\frac{\partial l}{\partial v}\{y\}(t){^C_aD^\alpha_t}h(t) dt \right) dx
= \int_a^b \left( -\frac{d}{dx}\int_x^b\frac{\partial L}{\partial z}[y](t)dt \right)
\left( \int_a^x \frac{\partial l}{\partial v}\{y\}(t){^C_aD^\alpha_t}h(t) dt \right)  dx\\
&= \left[- \left(\int_x^b\frac{\partial L}{\partial z}[y](t)dt \right)\left(\int_a^x
\frac{\partial l}{\partial v}\{y\}(t){^C_aD^\alpha_t}h(t) dt\right) \right]_a^b
+\int_a^b \left(\int_x^b\frac{\partial L}{\partial z}[y](t)dt \right)
\frac{\partial l}{\partial v}\{y\}(x){^C_aD^\alpha_x}h(x) \,  dx\\
&= \int_a^b {_xD^\alpha_b}\left(\int_x^b\frac{\partial L}{\partial z}[y](t)dt
\frac{\partial l}{\partial v}\{y\}(x)\right)h(x) \, dx
+\left[ {_xI^{1-\alpha}_b}\left(\int_x^b\frac{\partial L}{\partial z}[y](t)dt
\frac{\partial l}{\partial v}\{y\}(x) \right) h(x) \right]_a^b,
\end{align*}
and
$$
\int_a^b \frac{\partial L}{\partial z}[y](x)\left(\int_a^x
\frac{\partial l}{\partial w}\{y\}(t){_aI^\beta_t}h(t) dt \right) dx
=\int_a^b {_xI^\beta_b}\left(\int_x^b\frac{\partial L}{\partial z}[y](t)dt
\frac{\partial l}{\partial w}\{y\}(x)\right)h(x) \, dx.
$$
\end{proof}

The fractional Euler--Lagrange equation \eqref{ELeq} involves
not only fractional integrals and fractional derivatives,
but also indefinite integrals. Theorem~\ref{ELTEo} gives
a necessary condition to determine the possible choices
for extremizers.

\begin{definition}
Solutions to the fractional Euler--Lagrange equation \eqref{ELeq}
are called extremals for $J$ defined by \eqref{funct}.
\end{definition}

\begin{example}
\label{example:bra}
Consider the functional
\begin{equation}
\label{example}
J(y)=\int_0^1 \left[({^C_0D_x^\alpha}y(x)-\Gamma(\alpha+2)x)^2+z(x)\right]dx,
\end{equation}
where $\alpha \in (0,1)$ and
$$
z(x)=\int_0^x (y(t)-t^{\alpha+1})^2 \, dt,
$$
defined on the set
$$
\left\{ y\in \mathcal{F}([0,1];\mathbb{R})
:\, y(0)=0 \, \mbox{ and }\,  y(1)=1\right\}.
$$
Let
\begin{equation}
\label{eq:ext:ne}
y_\alpha(x)=x^{\alpha+1},\quad x\in[0,1].
\end{equation}
Then,
$$
{^C_0D_x^\alpha}y_\alpha(x)=\Gamma(\alpha+2)x.
$$
Since $J(y)\geq0$ for all admissible functions $y$,
and $J(y_\alpha)=0$, we have that $y_\alpha$ is a minimizer of $J$.
The Euler--Lagrange equation applied to \eqref{example} gives
\begin{equation}
\label{eq:fELeq:ne}
{_xD_1^\alpha}({^C_0D_x^\alpha}y(x)-\Gamma(\alpha+2)x)
+\int_x^1 1dt \, (y(x)-x^{\alpha+1})=0.
\end{equation}
Obviously, $y_\alpha$ is a solution of the fractional
differential equation \eqref{eq:fELeq:ne}.
\end{example}

The extremizer \eqref{eq:ext:ne}
of Example~\ref{example:bra} is smooth on the
closed interval $[0,1]$. This is not always
the case. As next example shows, minimizers
of \eqref{funct}--\eqref{bound} are not necessarily
$C^1$ functions.

\begin{example}
\label{example1}
Consider the following fractional variational problem: to minimize the functional
\begin{equation}
\label{funcExample}
J(y)=\int_0^1\left[ \left({^C_0D_x^\alpha}y(x)-1\right)^2+z(x)\right]dx
\end{equation}
on
$$
\left\{ y \in \mathcal{F}([0,1];\mathbb{R})
\, : \, y(0)=0 \quad \mbox{and}
\quad y(1)=\frac{1}{\Gamma(\alpha+1)}\right\},
$$
where $z$ is given by
$$
z(x)=\int_0^x \left( y(t)-\frac{t^\alpha}{\Gamma(\alpha+1)}\right)^2dt.
$$
Since ${^C_0D_x^\alpha}x^\alpha=\Gamma(\alpha+1)$,
we deduce easily that function
\begin{equation}
\label{eq:gs:ex}
\overline{y}(x)=\frac{x^\alpha}{\Gamma(\alpha+1)}
\end{equation}
is the global minimizer to the problem. Indeed, $J(y)\geq 0$
for all $y$, and $J(\overline y)=0$. Let us see that
$\overline{y}$ is an extremal for $J$.
The fractional Euler--Lagrange equation \eqref{ELeq} becomes
\begin{equation}
\label{ELeqExample}
2 \, {_xD_1^\alpha} ({^C_0D_x^\alpha}y(x)-1) + \int_x^1 1\, dt
\cdot  2 \left( y(x)-\frac{x^\alpha}{\Gamma(\alpha+1)}\right)=0.
\end{equation}
Obviously, $\overline{y}$ is a solution of equation \eqref{ELeqExample}.
\end{example}

\begin{remark}
The minimizer \eqref{eq:gs:ex} of Example~\ref{example1}
is not differentiable at 0, as $0<\alpha<1$. However,
$\overline{y}(0)=0$ and  ${^C_0D_x^\alpha}\overline{y}(x)
={_0D_x^\alpha}\overline{y}(x)=\Gamma(\alpha+1)$ for any
$x \in [0,1]$.
\end{remark}

\begin{corollary}[\textrm{cf.} equation (9) of \cite{AGRA1}]
If $y$ is a minimizer of
\begin{equation}
\label{funct7}
J(y)=\int_a^b L(x,y(x),{^C_aD_x^\alpha}y(x))dx,
\end{equation}
subject to the boundary conditions \eqref{bound},
then $y$ is a solution of the fractional equation
$$
\frac{\partial L}{\partial y}[y](x)
+{_xD^\alpha_b}\left( \frac{\partial L}{\partial v}[y](x) \right)=0.
$$
\end{corollary}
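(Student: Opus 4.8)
The plan is to obtain this as an immediate specialization of Theorem~\ref{ELTEo}. First I would observe that the functional \eqref{funct7} is exactly the functional \eqref{funct} in the degenerate case in which the Lagrangian $L$ does not depend on the variables $w$ and $z$, the fractional integral parameter $\beta>0$ is chosen arbitrarily (say $\beta=1$), and the inner integrand $l$ is taken identically zero, so that $z\equiv 0$ and the choice of $l$ is immaterial. Since $L\in C^1$ and the admissible trajectories lie in $\mathcal{F}([a,b];\mathbb{R})$, the hypotheses of Theorem~\ref{ELTEo} are met, and a minimizer of \eqref{funct7} subject to \eqref{bound} is a minimizer of \eqref{funct} in this degenerate situation.

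Next I would substitute into the fractional Euler--Lagrange equation \eqref{ELeq}. Because $L$ is independent of $w$, the partial derivative $\partial L/\partial w$ vanishes identically, which kills the third term ${_xI_b^\beta}(\partial L/\partial w\,[y](x))$. Because $L$ is independent of $z$, the partial derivative $\partial L/\partial z$ vanishes identically, so the factor $\int_x^b \frac{\partial L}{\partial z}[y](t)\,dt$ is zero; this annihilates the last three terms of \eqref{ELeq}, irrespective of $\partial l/\partial y$, $\partial l/\partial v$, $\partial l/\partial w$. What remains is precisely
$$
\frac{\partial L}{\partial y}[y](x)+{_xD^\alpha_b}\left(\frac{\partial L}{\partial v}[y](x)\right)=0,
$$
with $[y](x)=(x,y(x),{^C_aD_x^\alpha}y(x))$ now understood as the appropriate $3$-tuple, which is the asserted equation.

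Alternatively, and with essentially the same effort, one could rerun the first-variation argument from the proof of Theorem~\ref{ELTEo} directly in this simpler setting: set $j(\epsilon)=J(y+\epsilon h)$ for $h\in\mathcal{F}([a,b];\mathbb{R})$ with $h(a)=h(b)=0$, use $j'(0)=0$ to get $\int_a^b\big(\frac{\partial L}{\partial y}[y](x)h(x)+\frac{\partial L}{\partial v}[y](x){^C_aD^\alpha_x}h(x)\big)\,dx=0$, apply the fractional integration by parts formula \eqref{eq:frac:IBP} to the second term (the boundary term $\big[{_xI_b^{1-\alpha}}(\partial L/\partial v\,[y](x))\,h(x)\big]_a^b$ drops out because $h$ vanishes at the endpoints), and then invoke the fundamental lemma of the calculus of variations. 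There is no genuine obstacle here; the only points needing a line of care are the bookkeeping that the vanishing partials eliminate exactly the right terms of \eqref{ELeq}, and the justification of differentiation under the integral sign, which follows from the $C^1$ regularity of $L$ together with the continuity built into the definition of $\mathcal{F}([a,b];\mathbb{R})$.
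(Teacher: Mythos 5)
Your proposal is correct and follows essentially the same route as the paper, which also derives the corollary by specializing Theorem~\ref{ELTEo} to a Lagrangian independent of ${_aI_x^\beta}y$ and $z$, so that the vanishing partials $\partial L/\partial w$ and $\partial L/\partial z$ eliminate all but the first two terms of \eqref{ELeq}. Your added detail (and the optional direct first-variation argument) is fine but not needed beyond what the paper states.
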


\begin{proof}
Follows from Theorem~\ref{ELTEo} with an $L$ that
does not depend on ${_aI_x^\beta}y$ and $z$.
\end{proof}

We now derive the Euler--Lagrange equations for functionals
containing several dependent variables,
\textrm{i.e.}, for functionals of type
\begin{equation}
\label{functMul}
J(y_1,\ldots,y_n)
=\int_a^b L(x,y_1(x),\ldots, y_n(x),
{^C_aD_x^\alpha}y_1(x),\ldots,{^C_aD_x^\alpha}y_n(x),
{_aI_x^\beta}y_1(x),\ldots,{_aI_x^\beta}y_n(x),z(x))dx,
\end{equation}
where $n\in \mathbb{N}$ and $z$ is defined by
$$
z(x)=\int_a^x l(t,y_1(t),\ldots,y_n(t),{^C_aD_t^\alpha}y_1(t),
\ldots,{^C_aD_t^\alpha}y_n(t),{_aI_t^\beta}y_1(t),\ldots,{_aI_t^\beta}y_n(t))dt,
$$
subject to the boundary conditions
\begin{equation}
\label{boundMul}
y_k(a)=y_{a,k} \quad \mbox{and}
\quad y_k(b)=y_{b,k},\quad k\in\{1,\ldots,n\}.
\end{equation}
To simplify, we consider $y$ as the vector
$y=(y_1,\ldots,y_n)$.
Consider a family of variations $y+\epsilon h$, where
$|\epsilon| \ll 1$ and $h=(h_1,\ldots,h_n)$.
The boundary conditions \eqref{boundMul} imply that
$h_k(a)=0=h_k(b)$, for $k\in\{1,\ldots,n\}$.
The following theorem can be easily proved.

\begin{theorem}
\label{ELeqMult}
Let $y$ be a minimizer of $J$ as in \eqref{functMul},
subject to the boundary conditions \eqref{boundMul}.
Then, for all $k\in\{1,\ldots,n\}$ and for all $x\in[a,b]$,
$y$ is a solution of the fractional Euler--Lagrange equation
\begin{multline*}
\frac{\partial L}{\partial y_k}[y](x)+{_xD^\alpha_b}\left( \frac{\partial L}{\partial v_k}[y](x) \right)+
{_xI^\beta_b}\left( \frac{\partial L}{\partial w_k}[y](x) \right)+\int_x^b \frac{\partial L}{\partial z}[y](t)dt\cdot
\frac{\partial l}{\partial y_k}\{y\}(x)\\
+{_xD^\alpha_b}\left( \int_x^b \frac{\partial L}{\partial z}[y](t)dt\cdot \frac{\partial l}{\partial v_k}\{y\}(x)  \right)
+{_xI^\beta_b}\left( \int_x^b \frac{\partial L}{\partial z}[y](t)dt\cdot \frac{\partial l}{\partial w_k}\{y\}(x)  \right)=0.
\end{multline*}
\end{theorem}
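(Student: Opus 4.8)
The plan is to mimic, almost verbatim, the proof of Theorem~\ref{ELTEo}, exploiting that the perturbations of the different dependent variables may be chosen independently. First I would fix an index $k\in\{1,\ldots,n\}$ and an arbitrary function $h_k\in\mathcal{F}([a,b];\mathbb{R})$ with $h_k(a)=0=h_k(b)$, and consider the one-component variation $y+\epsilon h$ with $h=(0,\ldots,0,h_k,0,\ldots,0)$ and $|\epsilon|\ll 1$; by construction the boundary conditions \eqref{boundMul} are preserved. Putting $j(\epsilon)=J(y+\epsilon h)$, minimality of $y$ gives $j'(0)=0$.

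Differentiating $j$ at $\epsilon=0$ under the integral sign, and using that only the $k$-th slots of $L$ and $l$ feel the perturbation, yields
\begin{multline*}
\int_a^b \left[ \frac{\partial L}{\partial y_k}[y](x)\,h_k(x)
+ \frac{\partial L}{\partial v_k}[y](x)\,{^C_aD^\alpha_x}h_k(x)
+ \frac{\partial L}{\partial w_k}[y](x)\,{_aI^\beta_x}h_k(x)\right.\\
\left.+\frac{\partial L}{\partial z}[y](x)\int_a^x\left(
\frac{\partial l}{\partial y_k}\{y\}(t)h_k(t)
+\frac{\partial l}{\partial v_k}\{y\}(t){^C_aD^\alpha_t}h_k(t)
+\frac{\partial l}{\partial w_k}\{y\}(t){_aI^\beta_t}h_k(t)\right)dt\right]dx=0 ,
\end{multline*}
which is formally the first displayed equation in the proof of Theorem~\ref{ELTEo} with $(y,h,v,w)$ replaced by $(y_k,h_k,v_k,w_k)$. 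From this point I would invoke, one by one, the five identities used there: fractional integration by parts \eqref{eq:frac:IBP} to transfer ${^C_aD^\alpha_x}$ from $h_k$ onto $\partial L/\partial v_k$; the adjointness $\int_a^b g\cdot {_aI^\beta_x}f=\int_a^b f\cdot {_xI^\beta_b}g$ for the $w_k$-term; and the three manipulations of the indefinite-integral contributions, namely writing $\frac{\partial L}{\partial z}[y](x)=-\frac{d}{dx}\int_x^b\frac{\partial L}{\partial z}[y](t)\,dt$, a classical integration by parts, and then a further fractional integration by parts wherever ${^C_aD^\alpha_t}h_k$ or ${_aI^\beta_t}h_k$ survives. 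Every boundary term produced is annihilated by $h_k(a)=h_k(b)=0$.

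After collecting the terms one is left with $\int_a^b \Phi_k(x)\,h_k(x)\,dx=0$ for all admissible $h_k$, where $\Phi_k$ is precisely the left-hand side of the asserted Euler--Lagrange equation for the index $k$. The fundamental lemma of the calculus of variations (\cite[p.~32]{Brunt}) then forces $\Phi_k\equiv 0$ on $[a,b]$. Since $k\in\{1,\ldots,n\}$ was arbitrary, the full system follows. I do not expect any genuine obstacle here; the only point deserving a line of justification — and the reason the theorem is merely ``easily proved'' rather than immediate — is the legitimacy of varying a single component while freezing the others, which is valid exactly because the admissible set is a product, each $h_k$ being free subject only to its own homogeneous endpoint conditions, so the $n$ equations decouple cleanly.
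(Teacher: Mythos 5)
Your proposal is correct and follows essentially the route the paper intends: the paper itself omits the proof, merely setting up the vector variation $y+\epsilon h$ and declaring the result "easily proved," and your component-wise specialization $h=(0,\ldots,0,h_k,0,\ldots,0)$ combined with a verbatim repetition of the integrations by parts from Theorem~\ref{ELTEo} is exactly the intended argument. Your closing remark on why single-component variations are legitimate is the only point of substance beyond Theorem~\ref{ELTEo}, and it is handled correctly.
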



\section{Natural boundary conditions}
\label{sec:natbound}

In this section we consider a more general question.
Not only the unknown function $y$
is a variable in the problem,
but also the terminal time $T$ is an unknown.
For $T\in[a,b]$, consider the functional
\begin{equation}
\label{funct4}
J(y,T)=\int_a^T L[y](x)dx,
\end{equation}
where
$$
[y](x)=(x,y(x),{^C_aD_x^\alpha}y(x),{_aI_x^\beta}y(x),z(x)).
$$
The problem consists in finding a pair
$(y,T)\in \mathcal{F}([a,b];\mathbb{R})\times [a,b]$
for which the functional $J$ attains a minimum value.
First we give a remark that will be used later
in the proof of Theorem~\ref{TeoNatural}.

\begin{remark}
\label{remarkIntegral}
If $\phi$ is a continuous function, then (\textrm{cf.} \cite[p.~46]{Miller})
$$
\lim_{x\to T}{_xI_T^{1-\alpha}}\phi(x)=0
$$
for any $\alpha \in (0,1)$.
\end{remark}

\begin{theorem}
\label{TeoNatural}
Let $(y,T)$ be a minimizer of $J$ as in \eqref{funct4}.
Then, for all $x\in[a,T]$, $(y,T)$ is a solution
of the fractional equation
\begin{multline*}
\frac{\partial L}{\partial y}[y](x)+{_xD^\alpha_T}\left(
\frac{\partial L}{\partial v}[y](x) \right)+{_xI_T^\beta}
\left(\frac{\partial L}{\partial w}[y](x)\right)
+\int_x^T \frac{\partial L}{\partial z}[y](t)dt
\cdot \frac{\partial l}{\partial y}\{y\}(x)\\
+{_xD^\alpha_T}\left( \int_x^T \frac{\partial L}{\partial z}[y](t)dt
\cdot \frac{\partial l}{\partial v}\{y\}(x)  \right)
+{_xI^\beta_T}\left( \int_x^T \frac{\partial L}{\partial z}[y](t)dt
\cdot \frac{\partial l}{\partial w}\{y\}(x)  \right)=0
\end{multline*}
and satisfies the transversality conditions
$$
 \left[{_xI_T^{1-\alpha}}\left(  \frac{\partial L}{\partial v}[y](x)
+ \int_x^T  \frac{\partial L}{\partial z}[y](t)\, dt
\cdot \frac{\partial l}{\partial v}\{y\}(x) \right)\right]_{x=a}=0
$$
and
$$
L[y](T)=0.
$$
\end{theorem}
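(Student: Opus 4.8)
The plan is to mimic the variational argument behind Theorem~\ref{ELTEo}, but now letting both the trajectory and the terminal time move, and retaining the boundary terms that were previously annihilated by the condition $h(a)=h(b)=0$. First I would fix an arbitrary $h\in\mathcal{F}([a,b];\mathbb{R})$ and an arbitrary real number $\Delta T$, and put $j(\epsilon)=J(y+\epsilon h,\,T+\epsilon\Delta T)$ for $|\epsilon|\ll 1$; since $(y,T)$ is a minimizer, $j'(0)=0$. Differentiating, using the Leibniz rule for the variable upper limit together with differentiation under the integral sign (legitimate thanks to the $C^1$ hypotheses on $L$ and $l$), produces the endpoint contribution $L[y](T)\,\Delta T$ plus the integral of the first variation of the integrand — which is exactly the integral appearing at the start of the proof of Theorem~\ref{ELTEo}, with $b$ replaced by $T$, and where the variation of $z$ again enters through $\int_a^x(\partial l/\partial y\cdot h+\partial l/\partial v\cdot{^C_aD^\alpha_t}h+\partial l/\partial w\cdot{_aI^\beta_t}h)\,dt$.

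Next I would run the same integration-by-parts identities as in the proof of Theorem~\ref{ELTEo}: the fractional formula \eqref{eq:frac:IBP} for the ${^C_aD_x^\alpha}h$ terms, the boundary-term-free identity $\int g\cdot{_aI_x^\beta}h=\int h\cdot{_xI_T^\beta}g$ for the ${_aI_x^\beta}h$ terms, and the ordinary integration by parts via $-\frac{d}{dx}\int_x^T\frac{\partial L}{\partial z}[y](t)\,dt$ for the indefinite-integral terms, all with $b$ replaced by $T$. The key difference is the fate of the boundary terms. The ones coming from the $\partial l/\partial y$ and $\partial l/\partial w$ contributions still vanish \emph{structurally}: each carries a factor $\int_x^T\frac{\partial L}{\partial z}[y](t)\,dt$ that is $0$ at $x=T$ and a factor $\int_a^x(\cdots)\,dt$ that is $0$ at $x=a$. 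What survives are
$$
\left[{_xI_T^{1-\alpha}}\!\left(\frac{\partial L}{\partial v}[y](x)\right)h(x)\right]_a^T
\quad\text{and}\quad
\left[{_xI_T^{1-\alpha}}\!\left(\int_x^T\frac{\partial L}{\partial z}[y](t)\,dt\cdot\frac{\partial l}{\partial v}\{y\}(x)\right)h(x)\right]_a^T .
$$
Here the maps $x\mapsto\frac{\partial L}{\partial v}[y](x)$ and $x\mapsto\int_x^T\frac{\partial L}{\partial z}[y](t)\,dt\cdot\frac{\partial l}{\partial v}\{y\}(x)$ are continuous, so by Remark~\ref{remarkIntegral} their evaluations at $x=T$ are zero; only the $x=a$ contributions remain. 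Collecting everything, $j'(0)=0$ becomes
$$
L[y](T)\,\Delta T+\int_a^T E[y](x)\,h(x)\,dx
-\left[{_xI_T^{1-\alpha}}\!\left(\frac{\partial L}{\partial v}[y](x)+\int_x^T\frac{\partial L}{\partial z}[y](t)\,dt\cdot\frac{\partial l}{\partial v}\{y\}(x)\right)\right]_{x=a}h(a)=0,
$$
where $E[y]$ denotes the left-hand side of the claimed fractional Euler--Lagrange equation (with $b$ replaced by $T$).

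Finally I would extract the three conclusions by specializing the free data $h$ and $\Delta T$. Taking $\Delta T=0$ and $h$ with $h(a)=0=h(T)$, the identity collapses to $\int_a^T E[y](x)\,h(x)\,dx=0$ for all such $h$, and the fundamental lemma of the calculus of variations (\textrm{cf.} \cite[p.~32]{Brunt}) gives $E[y]\equiv 0$ on $[a,T]$, i.e.\ the stated Euler--Lagrange equation. With $E[y]\equiv 0$, the identity reduces to $L[y](T)\,\Delta T-\bigl[{_xI_T^{1-\alpha}}(\cdots)\bigr]_{x=a}\,h(a)=0$ for all real $\Delta T$ and all admissible values $h(a)$, which can be prescribed independently of each other and of the interior behaviour of $h$; choosing $h(a)=0$, $\Delta T\neq 0$ yields $L[y](T)=0$, and choosing $\Delta T=0$, $h(a)\neq 0$ yields the transversality condition at $x=a$. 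I expect the only delicate point to be the bookkeeping of the boundary terms — keeping straight which vanish identically and which vanish only through Remark~\ref{remarkIntegral} — and checking that the continuity hypothesis needed for Remark~\ref{remarkIntegral} indeed holds for the combined $z$-dependent expression.
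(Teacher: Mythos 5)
Your proposal is correct and follows essentially the same route as the paper: vary both $y$ and $T$, differentiate $j(\epsilon)=J(y+\epsilon h,T+\epsilon\triangle T)$ at $\epsilon=0$, repeat the integration-by-parts computations of Theorem~\ref{ELTEo} with $b$ replaced by $T$, and conclude from the arbitrariness of $h$ and $\triangle T$. Your explicit bookkeeping of which boundary terms vanish structurally and which vanish via Remark~\ref{remarkIntegral} is exactly the (tacit) content of the paper's shorter argument.
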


\begin{proof}
Let $h\in \mathcal{F}([a,b];\mathbb{R})$
be a variation, and let $\triangle T$
be a real number. Define the function
$$
j(\epsilon)=J(y+\epsilon h,T+\epsilon \triangle T)
$$
with $|\epsilon| \ll 1$.
Differentiating $j$ at $\epsilon=0$, and using the same procedure
as in Theorem~\ref{ELTEo}, we deduce that
\begin{multline*}
\triangle T \cdot L[y](T)+\int_a^T \left[\frac{\partial L}{\partial y}[y](x)
+{_xD^\alpha_T}\left( \frac{\partial L}{\partial v}[y](x) \right)
+{_xI_T^\beta}\left(\frac{\partial L}{\partial w}[y](x)\right)
+\int_x^T \frac{\partial L}{\partial z}[y](t)dt\cdot
\frac{\partial l}{\partial y}\{y\}(x)\right.\\
\left.+{_xD^\alpha_T}\left( \int_x^T \frac{\partial L}{\partial z}[y](t)dt
\cdot \frac{\partial l}{\partial v}\{y\}(x)\right)
+{_xI^\beta_T}\left( \int_x^T \frac{\partial L}{\partial z}[y](t)dt
\cdot \frac{\partial l}{\partial w}\{y\}(x)  \right)\right]h(x)dx\\
+ \left[ {_xI_T^{1-\alpha}}\left(\frac{\partial L}{\partial v}[y](x) \right)
h(x) \right]_a^T+\left[ {_xI^{1-\alpha}_T}
\left(\int_x^T\frac{\partial L}{\partial z}[y](t)dt
\cdot \frac{\partial l}{\partial v}\{y\}(x) \right) h(x)  \right]_a^T=0.
\end{multline*}
The theorem follows from the arbitrariness of $h$ and $\triangle T$.
\end{proof}

\begin{remark}
If $T$ is fixed, say $T=b$, then $\triangle T=0$
and the transversality conditions reduce to
\begin{equation}
\label{NaturalBoundCond}
\left[{_xI_b^{1-\alpha}}\left( \frac{\partial L}{\partial v}[y](x)
+ \int_x^b \frac{\partial L}{\partial z}[y](t)\, dt \cdot
\frac{\partial l}{\partial v}\{y\}(x) \right)\right]_a=0.
\end{equation}
\end{remark}

\begin{example}
Consider the problem of minimizing the functional $J$ as in \eqref{funcExample},
but without given boundary conditions. Besides equation \eqref{ELeqExample},
extremals must also satisfy
\begin{equation}
\label{boundExample}
\left[{_xI_1^{1-\alpha}}\left( {^C_0D_x^\alpha}y(x)-1 \right)\right]_0=0.
\end{equation}
Again, $\overline y$ given by \eqref{eq:gs:ex} is a solution
of \eqref{ELeqExample} and \eqref{boundExample}.
\end{example}

As a particular case, the following result of
\cite{AGRA1} is deduced.

\begin{corollary}[\textrm{cf.} equations (9) and (12) of \cite{AGRA1}]
\label{Cor:AGRA1}
If $y$ is a minimizer of $J$ as in \eqref{funct7},
then $y$ is a solution of
$$
\frac{\partial L}{\partial y}[y](x)
+{_xD^\alpha_b}\left( \frac{\partial L}{\partial v}[y](x) \right)=0
$$
and satisfies the transversality condition
$$
\left[{_xI_b^{1-\alpha}}\left(\frac{\partial L}{\partial v}[y](x)\right)\right]_a=0.
$$
\end{corollary}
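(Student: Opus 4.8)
The plan is to obtain this statement as a direct specialization of Theorem~\ref{TeoNatural}. First I would take the Lagrangian $L$ of \eqref{funct4} to be independent of the fractional-integral variable $w={_aI_x^\beta}y$ and of the indefinite-integral variable $z$, and discard $l$ altogether (equivalently, set $l\equiv0$, so that $z\equiv0$ along every admissible trajectory). This reduces the integrand of \eqref{funct4} to one of the form $L(x,y(x),{^C_aD_x^\alpha}y(x))$, that is, to the functional \eqref{funct7}. Next I would fix the terminal time at $T=b$, so that admissible variations satisfy $\triangle T=0$.

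With $\partial L/\partial w\equiv0$, $\partial L/\partial z\equiv0$ and no $l$, every term of the Euler--Lagrange equation of Theorem~\ref{TeoNatural} that carries a factor ${_xI^\beta_b}(\cdot)$, an integral $\int_x^b(\partial L/\partial z)\,dt$, or a partial derivative of $l$ vanishes identically, leaving precisely
$$
\frac{\partial L}{\partial y}[y](x)+{_xD^\alpha_b}\left(\frac{\partial L}{\partial v}[y](x)\right)=0 .
$$
For the boundary term I would invoke the reduced transversality condition \eqref{NaturalBoundCond} from the Remark following Theorem~\ref{TeoNatural}; with $\partial L/\partial z\equiv0$ the contribution containing $\partial l/\partial v$ drops out, leaving
$$
\left[{_xI_b^{1-\alpha}}\left(\frac{\partial L}{\partial v}[y](x)\right)\right]_a=0 .
$$
The auxiliary condition $L[y](T)=0$ of Theorem~\ref{TeoNatural} does not appear, since $T$ is no longer free ($\triangle T=0$).

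There is essentially no obstacle here: the argument is pure bookkeeping once Theorem~\ref{TeoNatural} is in hand. The single point worth flagging is why no transversality condition survives at the right endpoint even with $y(b)$ unconstrained: this is exactly Remark~\ref{remarkIntegral}, which gives $\lim_{x\to b}{_xI_b^{1-\alpha}}\phi(x)=0$ for continuous $\phi$, so that the evaluation of ${_xI_b^{1-\alpha}}\left(\frac{\partial L}{\partial v}[y](x)\right)h(x)$ at $x=b$ is automatically zero and only the value at $x=a$ contributes. This recovers equations (9) and (12) of \cite{AGRA1}.
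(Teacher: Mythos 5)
Your proposal is correct and follows essentially the same route as the paper: the published proof is a one-line specialization of Theorem~\ref{TeoNatural} (with $T=b$ fixed, so the reduced transversality condition \eqref{NaturalBoundCond} applies) to a Lagrangian independent of ${_aI_x^\beta}y$ and $z$. Your closing observation about the right endpoint being automatically handled via Remark~\ref{remarkIntegral} is exactly what the paper records separately in Remark~\ref{new:rem:6}.
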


\begin{proof}
The Lagrangian $L$ in \eqref{funct7}
does not depend on ${_aI_x^\beta}y$ and $z$,
and the result follows from Theorem~\ref{TeoNatural}.
\end{proof}

\begin{remark}
\label{new:rem:6}
Observe that the condition
$$
\left[{_xI_b^{1-\alpha}}\left(\frac{\partial L}{\partial v}[y](x)\right)\right]_b=0
$$
is implicitly satisfied in Corollary~\ref{Cor:AGRA1}
(\textrm{cf.} Remark~\ref{remarkIntegral}).
\end{remark}


\section{Fractional isoperimetric problems}
\label{sec:IsoProb}

An isoperimetric problem deals with the question
of optimizing a given functional
under the presence of an integral constraint.
This is a very old question, with its origins in the ancient Greece.
They where interested in determining the shape of a closed curve
with a fixed length and maximum area. This problem is known
as Dido's problem, and is an example of an isoperimetric
problem of the calculus of variations \cite{Brunt}.
For recent advancements on the subject we refer the reader to
\cite{Almeida2,Almeida3,iso:ts,MOMA09} and references therein.
In our case, within the fractional context, we state the isoperimetric
problem in the following way.
Determine the minimizers of a given functional
\begin{equation}
\label{funct2}
J(y)=\int_a^b L(x,y(x),{^C_aD_x^\alpha}y(x),{_aI_x^\beta}y(x),z(x))dx
\end{equation}
subject to the boundary conditions
\begin{equation}
\label{bound2}
y(a)=y_a \quad \mbox{and} \quad y(b)=y_b
\end{equation}
and the fractional integral constraint
\begin{equation}
\label{funct3}
I(y)=\int_a^b G(x,y(x),{^C_aD_x^\alpha}y(x),{_aI_x^\beta}y(x),z(x))dx
=\gamma, \quad \gamma\in\mathbb{R},
\end{equation}
where $z$ is defined by
$$
z(x)=\int_a^x l(t,y(t),{^C_aD_t^\alpha}y(t),{_aI_t^\beta}y(t))dt.
$$
As usual, we assume that all the functions $(x,y,v,w,z)\to L(x,y,v,w,z)$,
$(x,y,v,w)\to l(x,y,v,w)$, and $(x,y,v,w,z)\to G(x,y,v,w,z)$ are of class $C^1$.

\begin{theorem}
\label{IsoPro}
Let $y$ be a minimizer of $J$ as in \eqref{funct2},
under the boundary conditions \eqref{bound2} and isoperimetric constraint \eqref{funct3}.
Suppose that $y$ is not an extremal for $I$ in \eqref{funct3}. Then there exists
a constant $\lambda$ such that $y$ is a solution of the fractional equation
\begin{multline*}
\frac{\partial F}{\partial y}[y](x)
+{_xD^\alpha_b}\left( \frac{\partial F}{\partial v}[y](x) \right)
+{_xI^\beta_b}\left( \frac{\partial F}{\partial w}[y](x) \right)
+\int_x^b \frac{\partial F}{\partial z}[y](t)dt
\cdot \frac{\partial l}{\partial y}\{y\}(x)\\
+{_xD^\alpha_b}\left( \int_x^b \frac{\partial F}{\partial z}[y](t)dt
\cdot \frac{\partial l}{\partial v}\{y\}(x)\right)
+{_xI^\beta_b}\left( \int_x^b \frac{\partial F}{\partial z}[y](t)dt
\cdot \frac{\partial l}{\partial w}\{y\}(x)  \right)=0,
\end{multline*}
where $F=L-\lambda G$, for all $x\in[a,b]$.
\end{theorem}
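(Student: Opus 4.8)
The plan is to use the Lagrange multiplier technique adapted to the fractional setting, following the classical scheme for isoperimetric problems (see, e.g., \cite[Ch.~3]{Brunt}). First I would introduce a two-parameter family of admissible variations $\hat y = y + \epsilon_1 h_1 + \epsilon_2 h_2$, where $h_1,h_2 \in \mathcal{F}([a,b];\mathbb{R})$ vanish at $x=a$ and $x=b$, so that $\hat y$ satisfies the boundary conditions \eqref{bound2} for every $(\epsilon_1,\epsilon_2)$ in a neighborhood of $(0,0)$. Define $\mathcal{I}(\epsilon_1,\epsilon_2) = I(\hat y) - \gamma$. Since $y$ is not an extremal for $I$, there is a direction $h_2$ for which $\frac{\partial \mathcal{I}}{\partial \epsilon_2}(0,0) \neq 0$; this partial derivative is computed just like the first variation in the proof of Theorem~\ref{ELTEo}, with $L$ replaced by $G$, after fractional integration by parts via \eqref{eq:frac:IBP}, the Riemann--Liouville integral identity, and the manipulation of the term arising from the variation of $z$ (which is common to $J$ and $I$, since $z$ is built from $l$). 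Because $\mathcal{I}(0,0)=0$ and $\frac{\partial \mathcal{I}}{\partial \epsilon_2}(0,0)\neq 0$, the implicit function theorem produces a $C^1$ curve $\epsilon_2 = \epsilon_2(\epsilon_1)$, defined near $0$ with $\epsilon_2(0)=0$, along which $\mathcal{I}(\epsilon_1,\epsilon_2(\epsilon_1))=0$; hence the subfamily of $\hat y$ along this curve is admissible for the constrained problem.

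Next, since $y$ minimizes $J$ among admissible functions, $\epsilon_1 \mapsto J\big(\hat y\big|_{\epsilon_2=\epsilon_2(\epsilon_1)}\big)$ has a local minimum at $\epsilon_1=0$, so its derivative there vanishes. Expanding by the chain rule and using the relation $\epsilon_2'(0) = -\big(\frac{\partial \mathcal{I}}{\partial \epsilon_1}\big/\frac{\partial \mathcal{I}}{\partial \epsilon_2}\big)(0,0)$ obtained by differentiating $\mathcal{I}(\epsilon_1,\epsilon_2(\epsilon_1))\equiv 0$, one gets the multiplier identity: with $\lambda := \big(\frac{\partial J}{\partial \epsilon_2}\big/\frac{\partial \mathcal{I}}{\partial \epsilon_2}\big)(0,0)$, the first variation of $J-\lambda I$ in the direction $h_1$ vanishes, i.e. $\frac{\partial}{\partial \epsilon_1}(J-\lambda I)(\hat y)\big|_{(0,0)} = 0$ for every admissible $h_1$.

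Finally, I would compute this first variation of $J-\lambda I = \int_a^b F[y](x)\,dx$, with $F = L-\lambda G$, exactly as in the proof of Theorem~\ref{ELTEo}: differentiate under the integral sign; apply \eqref{eq:frac:IBP} to the term containing ${^C_aD_x^\alpha}h_1$; apply the Riemann--Liouville integral identity to the term containing ${_aI_x^\beta}h_1$; and for the term involving $\int_a^x(\cdots)\,dt$ coming from $\partial F/\partial z$, first write $\frac{\partial F}{\partial z}[y](x) = -\frac{d}{dx}\int_x^b\frac{\partial F}{\partial z}[y](t)\,dt$, integrate by parts, and then repeat the fractional manipulations on the resulting inner term, precisely as in Theorem~\ref{ELTEo}. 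All boundary contributions vanish because $h_1(a)=h_1(b)=0$, and the fundamental lemma of the calculus of variations \cite[p.~32]{Brunt}, applied with $h_1$ arbitrary, yields the stated equation. The main obstacle is the first step: showing that the constraint $I(y)=\gamma$ can be maintained along a one-parameter subfamily of variations, which is exactly where the non-extremality of $y$ for $I$ and the implicit function theorem are needed; once that is settled, the rest is a routine repetition of the argument behind Theorem~\ref{ELTEo}, now applied to $F$.
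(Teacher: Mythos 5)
Your proposal is correct and follows essentially the same route as the paper: a two-parameter family of variations, non-extremality of $y$ for $I$ plus the implicit function theorem to maintain the constraint, a Lagrange multiplier identity, and then the first-variation computation of Theorem~\ref{ELTEo} applied to $F=L-\lambda G$. The only cosmetic difference is that you derive the multiplier explicitly via the chain rule along $\epsilon_2(\epsilon_1)$, whereas the paper invokes the Lagrange multiplier rule from \cite[p.~77]{Brunt}; the two are equivalent.
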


\begin{proof}
Let $\epsilon_1,\epsilon_2\in \mathbb{R}$ be two real numbers such that
$|\epsilon_1|\ll1$ and $|\epsilon_2|\ll1$, with $\epsilon_1$ free
and $\epsilon_2$ to be determined later,
and let $h_1$ and $h_2$ be two functions satisfying
$$
h_1(a)=h_1(b)=h_2(a)=h_2(b)=0.
$$
Define functions $j$ and $i$ by
$$
j(\epsilon_1,\epsilon_2)=J(y+\epsilon_1h_1+\epsilon_2h_2)
$$
and
$$
i(\epsilon_1,\epsilon_2)=I(y+\epsilon_1h_1+\epsilon_2h_2)-\gamma.
$$
Doing analogous calculations as in the proof of Theorem~\ref{ELTEo}, one has
\begin{multline*}
\left.\frac{\partial i}{\partial \epsilon_2} \right|_{(0,0)}
= \int_a^b\left[\frac{\partial G}{\partial y}[y](x)
+{_xD^\alpha_b}\left( \frac{\partial G}{\partial v}[y](x) \right)
+{_xI^\alpha_b}\left( \frac{\partial G}{\partial w}[y](x) \right)
+\int_x^b \frac{\partial G}{\partial z}[y](t)dt
\cdot \frac{\partial l}{\partial y}\{y\}(x)\right.\\
\left. +{_xD^\alpha_b}\left(
\int_x^b \frac{\partial G}{\partial z}[y](t)dt
\cdot \frac{\partial l}{\partial v}\{y\}(x)  \right)
+{_xI^\beta_b}\left( \int_x^b \frac{\partial G}{\partial z}[y](t)dt
\cdot \frac{\partial l}{\partial w}\{y\}(x)  \right)\right] h_2(x) \, dx.
\end{multline*}
By hypothesis, $y$ is not an extremal for $I$
and therefore there must exist a function $h_2$ for which
$$
\left.\frac{\partial i}{\partial \epsilon_2} \right|_{(0,0)}\not=0.
$$
Since $i(0,0)=0$, by the implicit function theorem there exists
a function $\epsilon_2(\cdot)$, defined
in some neighborhood of zero, such that
\begin{equation}
\label{iso:const:pr}
i(\epsilon_1,\epsilon_2(\epsilon_1))=0.
\end{equation}
On the other hand, $j$ attains a minimum value at $(0,0)$ when subject
to the constraint \eqref{iso:const:pr}.
Because $\nabla i(0,0)\neq (0,0)$,
by the Lagrange multiplier rule \cite[p.~77]{Brunt}
there exists a constant $\lambda$ such that
$$
\nabla(j(0,0)-\lambda i(0,0))=(0,0).
$$
So
$$
\left.\frac{\partial j}{\partial \epsilon_1} \right|_{(0,0)}
-\lambda \left.\frac{\partial i}{\partial \epsilon_1} \right|_{(0,0)}=0.
$$
Differentiating $j$ and $i$ at zero, and
doing the same calculations as before, we get the desired result.
\end{proof}

Using the abnormal Lagrange multiplier rule \cite[p.~82]{Brunt},
the previous result can be generalized to include
the case when the minimizer is an extremal of $I$.

\begin{theorem}
Let $y$ be a minimizer of $J$ as in \eqref{funct2},
subject to the constraints \eqref{bound2} and \eqref{funct3}.
Then there exist two constants $\lambda_0$ and $\lambda$,
not both zero, such that $y$ is a solution of equation
\begin{multline*}
\frac{\partial K}{\partial y}[y](x)+{_xD^\alpha_b}\left( \frac{\partial K}{\partial v}[y](x) \right)
+{_xI^\beta_b} \left( \frac{\partial K}{\partial w}[y](x) \right)
+\int_x^b \frac{\partial K}{\partial z}[y](t)dt\cdot \frac{\partial l}{\partial y}\{y\}(x)\\
+{_xD^\alpha_b}\left( \int_x^b \frac{\partial K}{\partial z}[y](t)dt
\cdot \frac{\partial l}{\partial v}\{y\}(x)\right)
+{_xI^\beta_b}\left( \int_x^b \frac{\partial K}{\partial z}[y](t)dt
\cdot \frac{\partial l}{\partial w}\{y\}(x)  \right)=0
\end{multline*}
for all $x\in[a,b]$, where $K=\lambda_0 L-\lambda G$.
\end{theorem}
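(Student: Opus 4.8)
The plan is to mimic the proof of Theorem~\ref{IsoPro}, but replace the Lagrange multiplier rule for the normal case with its abnormal counterpart, so that we do not need the hypothesis that $y$ is not an extremal of $I$. As before, I would introduce a two-parameter family of variations $y+\epsilon_1 h_1+\epsilon_2 h_2$ with $h_1,h_2 \in \mathcal{F}([a,b];\mathbb{R})$ vanishing at $a$ and $b$, and define $j(\epsilon_1,\epsilon_2)=J(y+\epsilon_1h_1+\epsilon_2h_2)$ and $i(\epsilon_1,\epsilon_2)=I(y+\epsilon_1h_1+\epsilon_2h_2)-\gamma$. Since $y$ is a minimizer of $J$ subject to the constraint $i\equiv 0$ and the boundary conditions, the point $(0,0)$ is a constrained local minimum of $j$ on the level set $\{i=0\}$.

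Next I would apply the abnormal Lagrange multiplier rule from \cite[p.~82]{Brunt}: at a constrained extremum there exist constants $\lambda_0$ and $\lambda$, not both zero, with
$$
\nabla\bigl(\lambda_0 j(0,0)-\lambda i(0,0)\bigr)=(0,0).
$$
In particular, taking the partial derivative with respect to $\epsilon_1$ gives
$$
\lambda_0\left.\frac{\partial j}{\partial\epsilon_1}\right|_{(0,0)}
-\lambda\left.\frac{\partial i}{\partial\epsilon_1}\right|_{(0,0)}=0.
$$
Then I would compute $\partial j/\partial\epsilon_1$ and $\partial i/\partial\epsilon_1$ at $(0,0)$ exactly as in the proof of Theorem~\ref{ELTEo}: differentiating under the integral sign, then using the fractional integration-by-parts formula \eqref{eq:frac:IBP} to move $^C_aD_x^\alpha$ onto the coefficient (picking up a ${_xD_b^\alpha}$ term), the self-adjointness of the Riemann--Liouville integral to move $_aI_x^\beta$ (picking up a ${_xI_b^\beta}$ term), and Fubini together with $\frac{d}{dx}\int_x^b(\cdot)\,dt$ to handle the indefinite-integral term $z$. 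The boundary terms all vanish because $h_1(a)=h_1(b)=0$. Combining the two expansions with the coefficient $K=\lambda_0 L-\lambda G$ and invoking the fundamental lemma of the calculus of variations \cite[p.~32]{Brunt}, since $h_1$ is arbitrary, yields the stated Euler--Lagrange equation for $K$.

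The only genuine subtlety — and the reason this theorem is stated separately rather than folded into Theorem~\ref{IsoPro} — is the justification that $(0,0)$ is a legitimate point of application for the abnormal multiplier rule, i.e.\ that no regularity of the constraint map is needed: this is precisely the content of the abnormal version of the rule, so once that is cited the argument is routine. I would also remark that the normal case $\lambda_0\neq 0$ recovers Theorem~\ref{IsoPro} after rescaling $\lambda$, so the present statement is a strict generalization.
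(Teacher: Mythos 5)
Your proposal follows exactly the route the paper intends: the paper offers no displayed proof for this theorem, only the remark that it is obtained from the argument of Theorem~\ref{IsoPro} by invoking the abnormal Lagrange multiplier rule of \cite[p.~82]{Brunt} in place of the normal one, which is precisely your plan, with the first-variation computations carried over verbatim from Theorem~\ref{ELTEo}. The one point worth tightening is that applying the two-variable multiplier rule to $j$ and $i$ for a fixed pair $(h_1,h_2)$ yields multipliers that a priori depend on that pair; the standard repair (and the real content of the abnormal rule in the isoperimetric setting) is the dichotomy that either $y$ is an extremal for $I$, in which case $\lambda_0=0$, $\lambda=1$ gives the stated equation, or it is not, in which case Theorem~\ref{IsoPro} applies with $\lambda_0=1$.
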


\begin{corollary}[\textrm{cf.} Theorem~3.4 of \cite{Almeida}]
Let $y$ be a minimizer of
$$
J(y)=\int_a^b L(x,y(x),{^C_aD_x^\alpha}y(x))dx
$$
subject to the boundary conditions
$$
y(a)=y_a \quad \mbox{and} \quad y(b)=y_b
$$
and the isoperimetric constraint
$$
I(y)=\int_a^b G(x,y(x),{^C_aD_x^\alpha}y(x))dx
=\gamma, \quad \gamma\in\mathbb{R}.
$$
Then, there exist two constants $\lambda_0$ and $\lambda$,
not both zero, such that $y$ is a solution of equation
$$
\frac{\partial K}{\partial y}\left(x,y(x),{^C_aD_x^\alpha}y(x)\right)
+{_xD^\alpha_b}\left( \frac{\partial K}{\partial v}\left(x,y(x),{^C_aD_x^\alpha}y(x)\right) \right) =0
$$
for all $x\in[a,b]$, where $K=\lambda_0 L-\lambda G$. Moreover, if $y$
is not an extremal for $I$, then we may take $\lambda_0=1$.
\end{corollary}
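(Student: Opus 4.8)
The plan is to obtain this corollary as a direct specialization of the two isoperimetric results already established, namely Theorem~\ref{IsoPro} and the abnormal version stated immediately before the corollary. The functionals $J(y)=\int_a^b L(x,y(x),{^C_aD_x^\alpha}y(x))dx$ and $I(y)=\int_a^b G(x,y(x),{^C_aD_x^\alpha}y(x))dx$ are of the form \eqref{funct2} and \eqref{funct3} in which the Lagrangians $L$ and $G$ simply do not depend on the arguments $w={_aI_x^\beta}y$ and $z$; equivalently, one may take $l\equiv 0$, so that $z\equiv 0$ identically.

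First I would invoke the abnormal isoperimetric theorem with these choices. Since $\partial L/\partial w$, $\partial L/\partial z$, $\partial G/\partial w$, $\partial G/\partial z$ all vanish identically, so do $\partial K/\partial w$ and $\partial K/\partial z$ for $K=\lambda_0 L-\lambda G$. Consequently every term in the general Euler--Lagrange equation that carries a factor ${_xI^\beta_b}(\partial K/\partial w)$ or $\int_x^b(\partial K/\partial z)\,dt$ disappears, and the equation collapses to
\[
\frac{\partial K}{\partial y}\left(x,y(x),{^C_aD_x^\alpha}y(x)\right)
+{_xD^\alpha_b}\left( \frac{\partial K}{\partial v}\left(x,y(x),{^C_aD_x^\alpha}y(x)\right)\right)=0
\]
for all $x\in[a,b]$, with $\lambda_0$ and $\lambda$ not both zero. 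This is precisely the asserted conclusion.

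For the final sentence, I would observe that being an extremal for $I$ in the present reduced setting means solving $\partial G/\partial y+{_xD^\alpha_b}(\partial G/\partial v)=0$, which is exactly the specialization to $G$ of the Euler--Lagrange equation appearing in Theorem~\ref{IsoPro}; hence the hypothesis ``$y$ is not an extremal for $I$'' has the same meaning here as in that theorem. Under this hypothesis one applies Theorem~\ref{IsoPro} directly, rather than its abnormal extension, with $F=L-\lambda G$ and the same vanishing of the $w$- and $z$-derivatives, obtaining the stated equation with $\lambda_0=1$.

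Since the entire argument is a substitution into results already proved, there is no genuine analytic obstacle; the only point deserving care is the bookkeeping check that the notion of ``extremal for $I$'' used in Theorem~\ref{IsoPro} reduces correctly when $G$ is independent of $w$ and $z$, and this is immediate because those partial derivatives are identically zero.
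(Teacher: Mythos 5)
Your proposal is correct and matches the paper's intent exactly: the corollary is presented without a written proof precisely because it is the specialization of Theorem~\ref{IsoPro} and its abnormal extension to Lagrangians independent of ${_aI_x^\beta}y$ and $z$, which is what you carry out. Your extra remark checking that the notion of ``extremal for $I$'' reduces consistently is a reasonable piece of diligence but adds nothing beyond what the paper takes for granted.
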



\section{Holonomic constraints}
\label{sec:Holonomic}

In this section we consider the following problem. Minimize the functional
\begin{equation}
\label{funct8}
J(y_1,y_2)=\int_a^b L(x,y_1(x),y_2(x),{^C_aD_x^\alpha}y_1(x),
{^C_aD_x^\alpha}y_2(x),{_aI_x^\beta}y_1(x),{_aI_x^\beta}y_2(x),z(x))dx,
\end{equation}
where $z$ is defined by
$$
z(x)=\int_a^x l(t,y_1(t),y_2(t),{^C_aD_t^\alpha}y_1(t),
{^C_aD_t^\alpha}y_2(t),{_aI_t^\beta}y_1(t),{_aI_t^\beta}y_2(t))dt,
$$
when restricted to the boundary conditions
\begin{equation}
\label{boundconst8}
(y_1(a),y_2(a))=(y_1^a,y_2^a)
\mbox{ and } (y_1(b),y_2(b))=(y_1^b,y_2^b),
\quad y_1^a,y_2^a,y_1^b,y_2^b\in\mathbb{R},
\end{equation}
and the holonomic constraint
\begin{equation}
\label{subsconst}
g(x,y_1(x),y_2(x))=0.
\end{equation}
As usual, here
$$
(x,y_1,y_2,v_1,v_2,w_1,w_2,z)\to L(x,y_1,y_2,v_1,v_2,w_1,w_2,z),
$$
$$
(x,y_1,y_2,v_1,v_2,w_1,w_2)\to l(x,y_1,y_2,v_1,v_2,w_1,w_2)
$$
and
$$
(x,y_1,y_2)\to g(x,y_1,y_2)
$$
are all smooth. In what follows we make use
of the operator $[\cdot,\cdot]$ given by
$$
[y_1,y_2](x)
= (x,y_1(x),y_2(x),{^C_aD_x^\alpha}y_1(x),{^C_aD_x^\alpha}y_2(x),
{_aI_x^\beta}y_1(x),{_aI_x^\beta}y_2(x),z(x))\, ,
$$
we denote
$(x,y_1(x),y_2(x))$ by $(x,\mathbf{y}(x))$,
and the Euler--Lagrange equation obtained in \eqref{ELeq}
with respect to $y_i$ by $(ELE_i)$, $i=1,2$.

\begin{remark}
For simplicity, we are considering functionals depending
only on two functions $y_1$ and $y_2$. Theorem~\ref{thm:hol}
is, however, easily generalized for $n$ variables $y_1,\ldots,y_n$.
\end{remark}

\begin{theorem}
\label{thm:hol}
Let the pair $(y_1,y_2)$ be a minimizer of $J$ as in \eqref{funct8},
subject to the constraints \eqref{boundconst8}--\eqref{subsconst}.
If $\frac{\partial g}{\partial y_2}\not=0$,
then there exists a continuous function $\lambda:[a,b]\to\mathbb{R}$
such that $(y_1,y_2)$ is a solution of
\begin{multline}
\label{ELequation2}
\frac{\partial F}{\partial y_i}[y_1,y_2](x)
+{_xD^\alpha_b}\left( \frac{\partial F}{\partial v_i}[y_1,y_2](x) \right)
+{_xI_b^\beta}\left(\frac{\partial F}{\partial w_i}[y_1,y_2](x)\right)
+\int_x^b \frac{\partial F}{\partial z}[y_1,y_2](t)dt\cdot
\frac{\partial l}{\partial y_i}\{y_1,y_2\}(x)\\
+{_xD^\alpha_b}\left( \int_x^b \frac{\partial F}{\partial z}[y_1,y_2](t)dt
\cdot \frac{\partial l}{\partial v_i}\{y_1,y_2\}(x)  \right)
+{_xI^\beta_b}\left( \int_x^b \frac{\partial F}{\partial z}[y_1,y_2](t)dt
\cdot \frac{\partial l}{\partial w_i}\{y_1,y_2\}(x)\right)=0
\end{multline}
for all $x\in[a,b]$ and $i=1,2$,
where $F[y_1,y_2](x)=L[y_1,y_2](x)-\lambda (x) g(x,\mathbf{y}(x))$.
\end{theorem}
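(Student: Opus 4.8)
The plan is to use the holonomic constraint to reduce the problem to a free variational problem in $y_1$ alone, exploiting $\partial g/\partial y_2\neq 0$ through the implicit function theorem, and then to read off the multiplier $\lambda$ directly from the Euler--Lagrange expression associated with $y_2$.

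First I would fix an arbitrary variation $h_1\in\mathcal{F}([a,b];\mathbb{R})$ with $h_1(a)=h_1(b)=0$. Since $\partial g/\partial y_2(x,\mathbf{y}(x))\neq 0$ for every $x\in[a,b]$, the implicit function theorem applied pointwise to $g(x,y_1(x)+\epsilon h_1(x),y_2(x)+\,\cdot\,)=0$ yields, for $|\epsilon|\ll 1$, a function $(\epsilon,x)\mapsto\eta(\epsilon,x)$ with $\eta(0,x)=0$ such that the pair $(y_1+\epsilon h_1,\,y_2+\eta(\epsilon,\cdot))$ satisfies \eqref{subsconst} identically in $x$. Differentiating $g(x,y_1+\epsilon h_1,y_2+\eta)=0$ at $\epsilon=0$ and writing $h_2(x):=\frac{\partial\eta}{\partial\epsilon}(0,x)$ gives
$$
\frac{\partial g}{\partial y_1}(x,\mathbf{y}(x))\,h_1(x)+\frac{\partial g}{\partial y_2}(x,\mathbf{y}(x))\,h_2(x)=0 ,
$$
so in particular $h_2(a)=h_2(b)=0$ and the varied pair is admissible for each small $\epsilon$.

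Next, set $j(\epsilon)=J(y_1+\epsilon h_1,\,y_2+\eta(\epsilon,\cdot))$. Since $(y_1,y_2)$ is a minimizer, $j'(0)=0$, and carrying out the differentiation and the fractional integration-by-parts exactly as in the proof of Theorem~\ref{ELTEo} (componentwise, as in Theorem~\ref{ELeqMult}) produces
$$
\int_a^b\bigl[E_1(x)h_1(x)+E_2(x)h_2(x)\bigr]\,dx=0 ,
$$
where $E_i(x)$ denotes the left-hand side of $(ELE_i)$, i.e. the fractional Euler--Lagrange expression of $L$ with respect to $y_i$. I would then define
$$
\lambda(x)=\frac{E_2(x)}{\dfrac{\partial g}{\partial y_2}(x,\mathbf{y}(x))},
$$
which is continuous on $[a,b]$ because $E_2$ is continuous under the stated $C^1$ hypotheses and $\partial g/\partial y_2$ never vanishes. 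Since $g$ depends neither on $v_i,w_i,z$ nor enters $l$, the Euler--Lagrange expression of $F=L-\lambda g$ with respect to $y_2$ equals $E_2(x)-\lambda(x)\frac{\partial g}{\partial y_2}(x,\mathbf{y}(x))$, which vanishes by the definition of $\lambda$; this is \eqref{ELequation2} for $i=2$. Finally, substituting $h_2(x)=-\bigl(\partial g/\partial y_1\bigr)\big/\bigl(\partial g/\partial y_2\bigr)\,h_1(x)$ into the integral identity and using $E_2(x)/(\partial g/\partial y_2)=\lambda(x)$ gives
$$
\int_a^b\Bigl[E_1(x)-\lambda(x)\,\frac{\partial g}{\partial y_1}(x,\mathbf{y}(x))\Bigr]h_1(x)\,dx=0 ,
$$
and, $h_1$ being an arbitrary admissible variation, the fundamental lemma of the calculus of variations (\cite[p.~32]{Brunt}) yields $E_1(x)-\lambda(x)\frac{\partial g}{\partial y_1}(x,\mathbf{y}(x))=0$, which is \eqref{ELequation2} for $i=1$.

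I expect the only delicate point to be the regularity bookkeeping: one must ensure that the pointwise implicit-function construction produces $\eta(\epsilon,\cdot)$ inside the admissible class $\mathcal{F}([a,b];\mathbb{R})$ (so that ${^C_aD_x^\alpha}$ and ${_aI_x^\beta}$ of the varied curve exist and are continuous) and that $h_2$ inherits both the vanishing endpoint values and enough smoothness for the integration-by-parts steps borrowed from Theorem~\ref{ELTEo}; this follows from the smoothness of $g$ together with $h_1\in\mathcal{F}$, but merits an explicit remark. The algebraic core — that a single scalar function $\lambda$ simultaneously enforces \eqref{ELequation2} for both $i=1$ and $i=2$ — is automatic once $\lambda$ is defined from the $y_2$-equation and the linearized constraint is used to eliminate $h_2$ from the first variation.
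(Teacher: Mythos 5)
Your proposal is correct and follows essentially the same route as the paper: solve the constraint for the second component via $\partial g/\partial y_2\neq 0$, obtain the first variation $\int_a^b (E_1 h_1 + E_2 h_2)\,dx=0$, define $\lambda = E_2/(\partial g/\partial y_2)$, eliminate $h_2$ through the linearized constraint, and conclude with the fundamental lemma. Your treatment is in fact slightly more careful than the paper's on the implicit-function step (introducing $\eta(\epsilon,\cdot)$ and deriving the endpoint conditions on $h_2$ rather than assuming a variation linear in $\epsilon$), but this is a refinement of presentation, not a different argument.
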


\begin{proof}
Consider a variation of the optimal solution of type
$$
(\overline y_1,\overline y_2)= (y_1+\epsilon h_1,y_2+\epsilon h_2),
$$
where $h_1,h_2$ are two functions defined on $[a,b]$ satisfying
$$
h_1(a)=h_1(b)=h_2(a)=h_2(b)=0,
$$
and $\epsilon$ is a sufficiently small real parameter. Since
$\frac{\partial g}{\partial y_2}(x,\overline{y}_1(x),\overline{y}_2(x))\not=0$
for all $x\in[a,b]$, we can solve equation
$g(x,\overline y_1(x),\overline y_2(x))=0$
with respect to $h_2$, $h_2=h_2(\epsilon,h_1)$. Differentiating
$J(\overline y_1,\overline y_2)$ at $\epsilon=0$, and proceeding
similarly as done in the proof of Theorem~\ref{ELTEo}, we deduce that
\begin{equation}
\label{subsequation}
\int_a^b (ELE_1)h_1(x)+(ELE_2)h_2(x)\, dx=0.
\end{equation}
Besides, since $g(x,\overline y_1(x),\overline y_2(x))=0$,
differentiating at $\epsilon=0$ we get
\begin{equation}
\label{defh2}
h_2(x)=-\frac{\frac{\partial g}{\partial
y_1}(x,\mathbf{y}(x))}{\frac{\partial g}{\partial
y_2}(x,\mathbf{y}(x))}h_1(x).
\end{equation}
Define the function $\lambda$ on $[a,b]$ as
\begin{equation}
\label{subslambda}
\lambda(x)=\frac{(ELE_2)}{\frac{\partial g}{\partial y_2}(x,\mathbf{y}(x))}.
\end{equation}
Combining \eqref{defh2} and \eqref{subslambda},
equation \eqref{subsequation} can be written as
$$
\int_a^b \left[(ELE_1)-\lambda(x) \frac{\partial g}{\partial
y_1}(x,\mathbf{y}(x))\right] h_1(x)\, dx=0.
$$
By the arbitrariness of $h_1$, if follows that
$$
(ELE_1)-\lambda(x) \frac{\partial g}{\partial y_1}(x,\mathbf{y}(x))=0.
$$
Define $F$ as
$$
F[y_1,y_2](x)=L[y_1,y_2](x)-\lambda (x) g(x,\mathbf{y}(x)).
$$
Then, equations \eqref{ELequation2} follow.
\end{proof}


\section{Higher order Caputo derivatives}
\label{sec:Higher}

In this section we consider fractional variational problems,
when in presence of higher order Caputo derivatives.
We will restricted ourselves to the case where the orders
are non integer, since the integer case is already well studied
in the literature (for a modern account see \cite{MyID:194,ferreira,natorres}).

Let $n\in\mathbb{N}$, $\beta > 0$,
and $\alpha_k\in\mathbb{R}$ be such that
$\alpha_k\in(k-1,k)$ for $k\in\{1,\ldots,n\}$.
Admissible functions $y$ belong to $AC^n([a,b];\mathbb{R})$
and are such that ${^C_aD_x^{\alpha_k}}y$, $k = 1, \ldots, n$,
and ${_aI_x^\beta}y$ exist and are continuous on $[a,b]$.
We denote such class of functions by $\mathcal{F}^n([a,b];\mathbb{R})$.
For $\alpha=(\alpha_1,\ldots,\alpha_n)$, define the vector
\begin{equation}
\label{eq:y:ho:l}
{_a^C D _x^\alpha}y(x)
=({_a^C D _x^{\alpha_1}}y(x),\ldots,{_a^C D _x^{\alpha_n}}y(x)).
\end{equation}
The optimization problem is the following: to minimize or maximize
the functional
\begin{equation}
\label{funct5}
J(y)=\int_a^b L(x,y(x),{^C_aD_x^\alpha}y(x),{_aI_x^\beta}y(x),z(x))dx,
\end{equation}
$y \in \mathcal{F}^n([a,b];\mathbb{R})$,
subject to the boundary conditions
\begin{equation}
\label{bound5}
y^{(k)}(a)=y_{a,k} \quad \mbox{and}
\quad y^{(k)}(b)=y_{b,k}, \quad k\in\{0,\ldots,n-1\},
\end{equation}
where $z: [a,b] \to \mathbb{R}$ is defined by
$$
z(x)=\int_a^x l(t,y(t),{^C_aD_t^\alpha}y(t),{_aI_t^\beta}y(t))dt.
$$

\begin{theorem}
\label{thm:16}
If $y \in \mathcal{F}^n([a,b];\mathbb{R})$
is a minimizer of $J$ as in \eqref{funct5},
subject to the boundary conditions \eqref{bound5},
then $y$ is a solution of the fractional equation
\begin{multline*}
\frac{\partial L}{\partial y}[y](x)
+\sum_{k=1}^n{_xD^{\alpha_k}_b}\left( \frac{\partial L}{\partial v_k}[y](x) \right)
+{_xI^\beta_b}\left( \frac{\partial L}{\partial w}[y](x) \right)
+\int_x^b \frac{\partial L}{\partial z}[y](t)dt
\cdot \frac{\partial l}{\partial y}\{y\}(x)\\
+\sum_{k=1}^n{_xD^{\alpha_k}_b}\left( \int_x^b \frac{\partial L}{\partial z}[y](t)dt
\cdot \frac{\partial l}{\partial v_k}\{y\}(x)\right)
+{_xI^\beta_b}\left( \int_x^b \frac{\partial L}{\partial z}[y](t)dt
\cdot \frac{\partial l}{\partial w}\{y\}(x)  \right)=0
\end{multline*}
for all $x\in[a,b]$, where
$[y](x) = \left(x,y(x),{^C_aD_x^\alpha}y(x),{_aI_x^\beta}y(x),z(x)\right)$
with ${^C_aD_x^\alpha}y(x)$ as in \eqref{eq:y:ho:l}.
\end{theorem}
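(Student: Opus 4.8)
The approach mirrors the proof of Theorem~\ref{ELTEo}, with the only new ingredient being the integration-by-parts formula for higher-order Caputo derivatives (recalled in the Introduction) in place of the $\alpha\in(0,1)$ case. First I would take an arbitrary variation $h\in\mathcal{F}^n([a,b];\mathbb{R})$ satisfying the homogeneous boundary conditions $h^{(k)}(a)=h^{(k)}(b)=0$ for $k\in\{0,\ldots,n-1\}$, set $j(\epsilon)=J(y+\epsilon h)$, and use the fact that $j'(0)=0$. Differentiating under the integral sign and applying the chain rule through $z$, one obtains
\begin{multline*}
\int_a^b\left[\frac{\partial L}{\partial y}[y](x)h(x)
+\sum_{k=1}^n\frac{\partial L}{\partial v_k}[y](x)\,{^C_aD_x^{\alpha_k}}h(x)
+\frac{\partial L}{\partial w}[y](x)\,{_aI_x^\beta}h(x)\right.\\
\left.+\frac{\partial L}{\partial z}[y](x)\int_a^x\left(\frac{\partial l}{\partial y}\{y\}(t)h(t)
+\sum_{k=1}^n\frac{\partial l}{\partial v_k}\{y\}(t)\,{^C_aD_t^{\alpha_k}}h(t)
+\frac{\partial l}{\partial w}\{y\}(t)\,{_aI_t^\beta}h(t)\right)dt\right]dx=0.
\end{multline*}

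**Key steps.** The next step is to move all derivatives and integrals off of $h$. For each $k$, the term $\int_a^b\frac{\partial L}{\partial v_k}[y](x)\,{^C_aD_x^{\alpha_k}}h(x)\,dx$ is handled by the Caputo integration-by-parts formula: since $\alpha_k\in(k-1,k)$, it produces $\int_a^b{_xD_b^{\alpha_k}}\left(\frac{\partial L}{\partial v_k}[y](x)\right)h(x)\,dx$ plus a sum of boundary terms of the form $\left[{_xD_b^{\alpha_k+j-k}}\left(\frac{\partial L}{\partial v_k}[y]\right)h^{(k-1-j)}\right]_a^b$, $j=0,\ldots,k-1$, all of which vanish because $h^{(i)}(a)=h^{(i)}(b)=0$ for $i\le k-1$. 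The fractional-integral term is transferred using the fractional integration-by-parts formula for Riemann--Liouville integrals, giving $\int_a^b{_xI_b^\beta}\left(\frac{\partial L}{\partial w}[y](x)\right)h(x)\,dx$ with no boundary contribution. The terms involving $z$ require first rewriting $\frac{\partial L}{\partial z}[y](x)=-\frac{d}{dx}\int_x^b\frac{\partial L}{\partial z}[y](t)\,dt$ and integrating by parts in the classical sense to turn the iterated integral $\int_a^x(\cdots)dt$ into a pointwise expression; the boundary term vanishes at $x=b$ because $\int_b^b(\cdots)=0$ and at $x=a$ because the inner integral $\int_a^a(\cdots)=0$. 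This leaves $\int_x^b\frac{\partial L}{\partial z}[y](t)\,dt$ multiplying, respectively, $\frac{\partial l}{\partial y}\{y\}(x)h(x)$, $\frac{\partial l}{\partial v_k}\{y\}(x)\,{^C_aD_x^{\alpha_k}}h(x)$, and $\frac{\partial l}{\partial w}\{y\}(x)\,{_aI_x^\beta}h(x)$; applying the Caputo and Riemann--Liouville integration-by-parts formulas once more to the latter two (again with vanishing boundary terms, by the same argument on $h^{(i)}$) yields the operators $\sum_k{_xD_b^{\alpha_k}}(\cdots)$ and ${_xI_b^\beta}(\cdots)$ appearing in the statement. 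Collecting everything, the whole expression becomes $\int_a^b(\text{bracket})\,h(x)\,dx=0$, and the fundamental lemma of the calculus of variations gives that the bracket vanishes for all $x\in[a,b]$, which is precisely the claimed equation.

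**Main obstacle.** The only genuinely delicate point is the bookkeeping of boundary terms in the higher-order Caputo integration-by-parts formula: for each $k$ one gets $k$ separate boundary terms indexed by $j$, and one must check that every derivative order $k-1-j$ occurring there lies in $\{0,\ldots,n-1\}$ so that the corresponding boundary condition \eqref{bound5} on $h$ applies and kills it. This is routine but must be stated, and it is the reason the boundary conditions in \eqref{bound5} are imposed on all derivatives up to order $n-1$ rather than just on $y$ itself. Everything else is a direct transcription of the single-order argument of Theorem~\ref{ELTEo} with a finite sum over $k$ carried along.
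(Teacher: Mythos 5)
Your proposal is correct and follows essentially the same route as the paper's proof: the same first-variation computation, the higher-order Caputo integration-by-parts formula with boundary terms annihilated by $h^{(k-1-m)}(a)=h^{(k-1-m)}(b)=0$, the rewriting of $\frac{\partial L}{\partial z}[y](x)$ as $-\frac{d}{dx}\int_x^b\frac{\partial L}{\partial z}[y](t)\,dt$ followed by classical integration by parts for the $z$-terms, and the fundamental lemma. Your remark on checking that every derivative order $k-1-j$ lies in $\{0,\ldots,n-1\}$ is exactly the bookkeeping the paper carries out implicitly.
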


\begin{proof}
Let $h \in \mathcal{F}^n([a,b];\mathbb{R})$
be such that $h^{(k)}(a)=h^{(k)}(b)=0$,
for $k\in\{0,\ldots,n-1\}$. Define the new function
$j$ as $j(\epsilon)=J(y+\epsilon h)$. Then
\begin{multline}
\label{eq1}
\int_a^b \left[ \frac{\partial L}{\partial y}[y](x)h(x)
+ \sum_{k=1}^n \frac{\partial L}{\partial v_k}[y](x){^C_aD^{\alpha_k}_x}h(x)
+\frac{\partial L}{\partial w}[y](x){_aI^\beta_x}h(x)\right.\\
\left.+\frac{\partial L}{\partial z}[y](x)\int_a^x
\left( \frac{\partial l}{\partial y}\{y\}(t)h(t)
+\sum_{k=1}^n\frac{\partial l}{\partial v_k}\{y\}(t){^C_aD^{\alpha_k}_t}h(t)
+\frac{\partial l}{\partial w}\{y\}(t){_aI^\beta_t}h(t) \right)dt\right]dx=0.
\end{multline}
Integrating by parts, we get that
\begin{multline*}
\int_a^b \frac{\partial L}{\partial v_k}[y](x){^C_aD^{\alpha_k}_x}h(x) dx
=\int_a^b {_x D_b^{\alpha_k}}\left(\frac{\partial L}{\partial v_k}[y](x)\right)h(x)dx\\
+\sum_{m=0}^{k-1}\left[{_xD_b^{\alpha_k+m-k}}\left(\frac{\partial L}{\partial v_k}[y](x) \right)
h^{(k-1-m)}(x)\right]_a^b =\int_a^b {_x D_b^{\alpha_k}}\left(
\frac{\partial L}{\partial v_k}[y](x) \right)h(x)dx
\end{multline*}
for all $k\in\{1,\ldots,n\}$. Moreover, one has
$$
\int_a^b \frac{\partial L}{\partial w}[y](x){_aI^\beta_x}h(x) dx
=\int_a^b {_x I_b^\beta} \left(\frac{\partial L}{\partial w}[y](x) \right)h(x)dx,
$$
$$
\int_a^b \frac{\partial L}{\partial z}[y](x)\int_a^x
\frac{\partial l}{\partial y}\{y\}(t)h(t) dt \,  dx
=\int_a^b \left(\int_x^b\frac{\partial L}{\partial z}[y](t)dt
\right) \frac{\partial l}{\partial y}\{y\}(x)h(x) \,  dx,
$$
\begin{align*}
\int_a^b & \frac{\partial L}{\partial z}[y](x)\left(\int_a^x
\frac{\partial l}{\partial v_k}\{y\}(t){^C_aD^{\alpha_k}_t}h(t) dt \right)dx
=\int_a^b \left(\int_x^b\frac{\partial L}{\partial z}[y](t)dt \right)
\frac{\partial l}{\partial v_k}\{y\}(x){^C_aD^{\alpha_k}_x}h(x) \,  dx\\
&= \int_a^b {_xD^{\alpha_k}_b}\left(\int_x^b\frac{\partial L}{\partial z}[y](t)dt
\frac{\partial l}{\partial v_k}\{y\}(x) \right)h(x) \, dx\\
&\qquad +\sum_{m=0}^{k-1}\left[{_xD_b^{\alpha_k+m-k}}\left(
\int_x^b\frac{\partial L}{\partial z}[y](t)dt
\frac{\partial l}{\partial v_k}\{y\}(x)\right)
h^{(k-1-m)}(x)\right]_a^b\\
&= \int_a^b {_xD^{\alpha_k}_b}\left(
\int_x^b\frac{\partial L}{\partial z}[y](t)dt
\frac{\partial l}{\partial v_k}\{y\}(x) \right)h(x) \, dx,
\end{align*}
and
$$
\int_a^b \frac{\partial L}{\partial z}[y](x)\left(\int_a^x
\frac{\partial l}{\partial w}\{y\}(t){_aI^\beta_t}h(t) dt
\right)dx=\int_a^b {_xI^\beta_b}
\left(\int_x^b\frac{\partial L}{\partial z}[y](t)dt
\frac{\partial l}{\partial w}\{y\}(x) \right)h(x) \, dx.
$$
Replacing these last relations into equation \eqref{eq1},
and applying the fundamental lemma of the calculus of variations,
we obtain the intended necessary condition.
\end{proof}

We now consider the higher-order problem without
the presence of boundary conditions \eqref{bound5}.

\begin{theorem}
\label{higherOrderNatural}
If $y \in \mathcal{F}^n([a,b];\mathbb{R})$
is a minimizer of $J$ as in \eqref{funct5},
then $y$ is a solution of the fractional equation
\begin{multline*}
\frac{\partial L}{\partial y}[y](x)
+\sum_{k=1}^n{_xD^{\alpha_k}_b}\left(
\frac{\partial L}{\partial v_k}[y](x) \right)+{_xI^\beta_b}
\left( \frac{\partial L}{\partial w}[y](x) \right)
+\int_x^b \frac{\partial L}{\partial z}[y](t)dt
\cdot \frac{\partial l}{\partial y}\{y\}(x)\\
+\sum_{k=1}^n{_xD^{\alpha_k}_b}\left( \int_x^b
\frac{\partial L}{\partial z}[y](t)dt\cdot \frac{\partial l}{\partial v_k}\{y\}(x)  \right)
+{_xI^\beta_b}\left( \int_x^b \frac{\partial L}{\partial z}[y](t)dt
\cdot \frac{\partial l}{\partial w}\{y\}(x)  \right)=0
\end{multline*}
for all $x\in[a,b]$, and satisfies the natural boundary conditions
\begin{equation}
\label{eq:nbc:ho}
\sum_{m=k}^n \left[ {_xD_b^{\alpha_m-k}}\left(
\frac{\partial L}{\partial v_k}[y](x)
+ \int_x^b\frac{\partial L}{\partial z}[y](t)dt
\frac{\partial l}{\partial v_k}\{y\}(x)\right) \right]_a^b=0,
\quad \mbox{for all} \quad k\in\{1,\ldots,n\}.
\end{equation}
\end{theorem}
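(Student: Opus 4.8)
The plan is to follow the same variational scheme used in the proofs of Theorem~\ref{thm:16} and Theorem~\ref{TeoNatural}, but now keeping track of all boundary contributions that were previously killed by the conditions $h^{(k)}(a)=h^{(k)}(b)=0$. First I would take an arbitrary variation $h \in \mathcal{F}^n([a,b];\mathbb{R})$ with \emph{no} constraints at the endpoints, set $j(\epsilon)=J(y+\epsilon h)$, and write $j'(0)=0$; this produces exactly equation \eqref{eq1} with $h$ now free. The key computation is the fractional integration-by-parts formula for higher order Caputo derivatives quoted in the Introduction: for each $k\in\{1,\ldots,n\}$,
\begin{equation*}
\int_a^b \frac{\partial L}{\partial v_k}[y](x)\,{^C_aD^{\alpha_k}_x}h(x)\,dx
=\int_a^b {_xD_b^{\alpha_k}}\!\left(\frac{\partial L}{\partial v_k}[y](x)\right)h(x)\,dx
+\sum_{m=0}^{k-1}\left[{_xD_b^{\alpha_k+m-k}}\!\left(\frac{\partial L}{\partial v_k}[y](x)\right)h^{(k-1-m)}(x)\right]_a^b,
\end{equation*}
and similarly for the term coming from $\partial L/\partial z$ times the inner $\partial l/\partial v_k$ integral, which after the identity $\frac{\partial L}{\partial z}[y](x)=-\frac{d}{dx}\int_x^b\frac{\partial L}{\partial z}[y](t)\,dt$ and an ordinary integration by parts reduces to the same shape with $\frac{\partial L}{\partial v_k}[y](x)$ replaced by $\int_x^b\frac{\partial L}{\partial z}[y](t)\,dt\cdot\frac{\partial l}{\partial v_k}\{y\}(x)$. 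The $w$-terms (fractional integrals) contribute no boundary terms, as in the previous proofs.

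Next I would collect terms. The interior integrand multiplying $h(x)$ is precisely the left-hand side of the asserted Euler--Lagrange equation, so that part is identical to Theorem~\ref{thm:16}. All the leftover boundary terms assemble into
\begin{equation*}
\sum_{k=1}^n\sum_{m=0}^{k-1}\left[{_xD_b^{\alpha_k+m-k}}\!\left(\frac{\partial L}{\partial v_k}[y](x)+\int_x^b\frac{\partial L}{\partial z}[y](t)\,dt\,\frac{\partial l}{\partial v_k}\{y\}(x)\right)h^{(k-1-m)}(x)\right]_a^b=0 .
\end{equation*}
Now I would reindex the double sum by the order of the derivative of $h$ that appears. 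Writing $r=k-1-m$, so that $r$ ranges over $\{0,\ldots,k-1\}$ and $\alpha_k+m-k=\alpha_k-1-r$, the coefficient of $[h^{(r)}(x)]_a^b$ is $\sum_{k=r+1}^n {_xD_b^{\alpha_k-1-r}}(\cdots)$. Relabelling $k\mapsto m$ and $r+1\mapsto k$ (so $r=k-1$) converts this into $\sum_{m=k}^n {_xD_b^{\alpha_m-k}}(\cdots)$, which is exactly the bracket displayed in \eqref{eq:nbc:ho}. So the total boundary contribution is $\sum_{k=1}^n \big(\text{bracket}_k\big)\,[h^{(k-1)}(x)]_a^b$, with $\text{bracket}_k$ the quantity in \eqref{eq:nbc:ho}.

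Finally, to conclude I would argue in two stages. Restricting first to variations $h$ with $h^{(k)}(a)=h^{(k)}(b)=0$ for all $k$, the boundary sum vanishes and the fundamental lemma of the calculus of variations gives the Euler--Lagrange equation for all $x\in[a,b]$. With that equation now in force, the interior integral vanishes for \emph{every} admissible $h$, so for arbitrary $h$ we are left with $\sum_{k=1}^n \text{bracket}_k\cdot[h^{(k-1)}(x)]_a^b=0$. Since the values $h^{(k-1)}(a)$ and $h^{(k-1)}(b)$, $k=1,\ldots,n$, can be chosen independently (one may build, e.g., a polynomial or bump function realizing any prescribed jet at $a$ and at $b$ while still lying in $\mathcal{F}^n$), each $\text{bracket}_k$ must vanish separately, which is \eqref{eq:nbc:ho}. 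The only point needing a little care — and the main obstacle — is precisely this bookkeeping/reindexing of the nested double sum into the stated single-index form, together with justifying that the various $h^{(k-1)}$ at the two endpoints are genuinely independent parameters; everything else is a direct transcription of the arguments already given for Theorems~\ref{thm:16} and~\ref{TeoNatural}.
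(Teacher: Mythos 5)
Your proposal is correct and follows essentially the same route as the paper: obtain the Euler--Lagrange equation from variations with vanishing endpoint jets, substitute it back into the first variation, and then extract the natural boundary conditions from the leftover double boundary sum using the independence of the endpoint values $h^{(k-1)}(a)$, $h^{(k-1)}(b)$ --- the paper isolates each $k$ by a specially chosen variation, while you reindex the double sum first, which is the same bookkeeping. One remark: carried out carefully, your relabelling places $\partial L/\partial v_m$ (the summation index) rather than $\partial L/\partial v_k$ inside the bracket, so the subscript appearing in the paper's display \eqref{eq:nbc:ho} is best read as a typo rather than as a discrepancy in your argument.
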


\begin{proof}
The proof follows the same pattern as the proof of Theorem~\ref{thm:16}.
Since admissible functions $y$ are not required to satisfy given boundary conditions,
the variation functions $h$ may take any value at the boundaries as well,
and thus the condition
\begin{equation}
\label{HigBoundCons}
h^{(k)}(a)=h^{(k)}(b)=0, \quad \mbox{for } k\in\{0,\ldots,n-1\},
\end{equation}
is no longer imposed \textit{a priori}. If we consider the first variation
of $J$ for variations $h$ satisfying condition \eqref{HigBoundCons},
we obtain the Euler--Lagrange equation. Replacing it on the expression
of the first variation, we conclude that
$$
\sum_{k=1}^n \sum_{m=0}^{k-1}\left[{_xD_b^{\alpha_k
+m-k}}\left(\frac{\partial L}{\partial v_k}[y](x)
+\int_x^b\frac{\partial L}{\partial z}[y](t)dt
\frac{\partial l}{\partial v_k}\{y\}(x)  \right)h^{(k-1-m)}(x)\right]_a^b=0.
$$
To obtain the transversality condition with respect to $k$,
for $k\in\{1,\ldots,n\}$, we consider variations satisfying the condition
$$
h^{(k-1)}(a)\not=0\not=h^{(k-1)}(b) \quad \mbox{and }
h^{(j-1)}(a)=0=h^{(j-1)}(b), \quad \mbox{for all }
j\in\{0,\ldots,n\}\setminus\{k\}.
$$
\end{proof}

\begin{remark}
Some of the terms that appear in the natural boundary
conditions \eqref{eq:nbc:ho} are equal to zero
(\textrm{cf.} Remark~\ref{remarkIntegral} and
Remark~\ref{new:rem:6}).
\end{remark}


\section{Fractional Lagrange problems}
\label{sec:FracOpt}

We now prove a necessary optimality condition
for a fractional Lagrange problem, when the Lagrangian
depends again on an indefinite integral.
Consider the cost functional defined by
\begin{equation}
\label{funct6}
J(y,u)=\int_a^b L\left(x,y(x),u(x),{_aI_x^\beta}y(x),z(x)\right)dx,
\end{equation}
to be minimized or maximized
subject to the fractional dynamical system
\begin{equation}
\label{dynamic6}
{^C_aD_x^\alpha}y(x)=f(x,y(x),u(x),{_aI_x^\beta}y(x),z(x))
\end{equation}
and the boundary conditions
\begin{equation}
\label{bound6}
y(a)=y_a \quad \mbox{and} \quad y(b)=y_b,
\end{equation}
where
$$
z(x)=\int_a^x l\left(t,y(t),{^C_aD_t^\alpha}y(t),{_aI_t^\beta}y(t)\right)dt.
$$
We assume the functions $(x,y,v,w,z)\to f(x,y,v,w,z)$,
$(x,y,v,w,z)\to L(x,y,v,w,z)$, and $(x,y,v,w)\to l(x,y,v,w)$,
to be of class $C^1$ with respect to all their arguments.

\begin{remark}
If $f(x,y(x),u(x),{_aI_x^\beta}y(x),z(x))=u(x)$,
the Lagrange problem \eqref{funct6}--\eqref{bound6}
reduces to the fractional variational problem
\eqref{funct}--\eqref{bound} studied
in Section~\ref{sec:Fundprob}.
\end{remark}

An optimal solution is a pair of functions $(y,u)$ that minimizes $J$
as in \eqref{funct6}, subject to the fractional
dynamic equation \eqref{dynamic6} and the boundary conditions \eqref{bound6}.

\begin{theorem}
If $(y,u)$ is an optimal solution to the fractional
Lagrange problem \eqref{funct6}--\eqref{bound6}, then there exists
a function $p$ for which the triplet $(y,u,p)$ satisfies the Hamiltonian system
$$
\left\{
\begin{array}{ll}
{^C_aD_x^\alpha}y(x)&=\displaystyle\frac{\partial H}{\partial p}\lceil y,u,p \rceil(x),\\
{_xD_b^\alpha}p(x)&=\displaystyle\frac{\partial H}{\partial y}\lceil y,u,p \rceil(x)+{_xI_b^\beta}
\left(\frac{\partial H}{\partial w}
\lceil y,u,p \rceil(x)\right)+\int_x^b \frac{\partial H}{\partial z}\lceil y,u,p \rceil(t)dt\cdot
\frac{\partial l}{\partial y}\{y\}(x)\\
&\quad +\displaystyle{_xD^{\alpha}_b}\left(
\int_x^b \frac{\partial H}{\partial z}\lceil y,u,p \rceil(t)dt\cdot
\frac{\partial l}{\partial v}\{y\}(x)\right)+{_xI^{\beta}_b}\left(
\int_x^b \frac{\partial H}{\partial z}\lceil y,u,p \rceil(t)dt\cdot
\frac{\partial l}{\partial w}\{y\}(x)\right)
\end{array}
\right.$$
and the stationary condition
$$
\frac{\partial H}{\partial u}\lceil y,u,p \rceil(x)=0,
$$
where the Hamiltonian $H$ is defined by
$$
H\lceil y,u,p \rceil(x)=L(x,y(x),u(x),{_aI_x^\beta}y(x),z(x))
+p(x)f(x,y(x),u(x),{_aI_x^\beta}y(x),z(x))
$$
and
$$
\lceil y,u,p \rceil(x)= (x,y(x),u(x),{_aI_x^\beta}y(x),z(x),p(x))\, ,
\quad \{y\}(x)=(x,y(x),{^C_aD_x^\alpha}y(x),{_aI_x^\beta}y(x)).
$$
\end{theorem}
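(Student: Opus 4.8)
The plan is to reduce the Lagrange problem to a variational problem of the type treated in Section~\ref{sec:Fundprob} by adjoining the dynamical constraint \eqref{dynamic6} with a Lagrange multiplier function $p$, and then to apply (a suitable version of) Theorem~\ref{ELTEo}. First I would form the augmented functional
\begin{equation*}
\tilde{J}(y,u,p)=\int_a^b\Bigl[H\lceil y,u,p\rceil(x)-p(x)\,{^C_aD_x^\alpha}y(x)\Bigr]dx,
\end{equation*}
observing that along any trajectory satisfying \eqref{dynamic6} one has $\tilde{J}(y,u,p)=J(y,u)$, so an optimal $(y,u)$ together with any $p$ is a critical point of $\tilde{J}$ among triplets for which $y$ meets the boundary conditions \eqref{bound6} while $u$ and $p$ are free. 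Note that $H$ depends on $z$, and $z$ still depends on ${^C_aD_x^\alpha}y$ through $l$, so this is genuinely a problem with an indefinite integral and the full machinery of Theorem~\ref{ELTEo} is needed; the extra term $-p\,{^C_aD_x^\alpha}y$ is linear in the Caputo derivative and contributes, after the fractional integration by parts \eqref{eq:frac:IBP}, a term ${_xD_b^\alpha}p$.

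Next I would compute the first variation of $\tilde{J}$ with respect to each of the three independent variations $h$ (for $y$, with $h(a)=h(b)=0$), $\eta$ (for $u$, free), and $\xi$ (for $p$, free), set each to zero, and read off three conditions. Variation in $p$: since $\partial H/\partial p=f$, the coefficient of $\xi(x)$ is exactly $f\lceil y,u,p\rceil(x)-{^C_aD_x^\alpha}y(x)$, giving back the state equation ${^C_aD_x^\alpha}y=\partial H/\partial p$. Variation in $u$: $u$ appears only pointwise (inside $H$; it does not appear inside $l$ since $l$ depends on ${^C_aD_x^\alpha}y$ rather than on $u$), so the coefficient of $\eta(x)$ is $\partial H/\partial u\lceil y,u,p\rceil(x)$, yielding the stationary condition. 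Variation in $y$: here $y$ enters $H$ directly, through ${_aI_x^\beta}y$, through $z$ (via $l$), and the adjoined term carries ${^C_aD_x^\alpha}y$; applying exactly the four integration-by-parts identities displayed in the proof of Theorem~\ref{ELTEo} — the fractional-derivative one \eqref{eq:frac:IBP} for the $-p\,{^C_aD_x^\alpha}y$ term and for the $\partial l/\partial v$ term, the fractional-integral one for the $\partial H/\partial w$ and $\partial l/\partial w$ terms, and the indefinite-integral swap $\int_a^x(\cdot)\mapsto\int_x^b(\cdot)$ for the $z$-terms — and invoking the fundamental lemma of the calculus of variations gives
\begin{equation*}
{_xD_b^\alpha}p(x)=\frac{\partial H}{\partial y}\lceil y,u,p\rceil(x)+{_xI_b^\beta}\!\left(\frac{\partial H}{\partial w}\lceil y,u,p\rceil(x)\right)+(\text{the three }z\text{-terms}),
\end{equation*}
which is precisely the adjoint equation in the statement, the left-hand side ${_xD_b^\alpha}p$ being the contribution of the linear term after integration by parts. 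The boundary terms from these integrations by parts all vanish because $h(a)=h(b)=0$.

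The main obstacle is bookkeeping rather than conceptual: one must be careful that $\partial H/\partial y$ means the partial derivative of $H$ holding ${_aI_x^\beta}y$, $z$ and $p$ fixed (so the $w$- and $z$-dependencies are genuinely separate terms, not folded into $\partial H/\partial y$), and one must track that the multiplier $p(x)$ multiplying ${^C_aD_x^\alpha}y(x)$ is, after \eqref{eq:frac:IBP}, transferred onto $h(x)$ as ${_xD_b^\alpha}p(x)$ with the correct sign, while the boundary contribution $[{_xI_b^{1-\alpha}}p\cdot h]_a^b$ drops out by the fixed endpoints. A secondary subtlety is the regularity needed for ${_xD_b^\alpha}p$ to exist and for \eqref{eq:frac:IBP} to apply to the term involving $p$; one assumes $p$ is chosen in the appropriate class (as is standard for the adjoint in such Hamiltonian formulations), e.g. so that ${_xI_b^{1-\alpha}}p\in AC$. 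Once these points are handled, the three relations assemble into the Hamiltonian system and stationary condition exactly as stated.
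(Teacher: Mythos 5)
Your proposal is correct and follows essentially the same route as the paper: the paper's proof is a one-line application of Theorem~\ref{ELeqMult} to the augmented functional $J^*(y,u,p)=\int_a^b\bigl[H\lceil y,u,p\rceil(x)-p(x)\,{^C_aD_x^\alpha}y(x)\bigr]dx$ with respect to $y$, $u$ and $p$, which is exactly the functional you construct and the three variations you carry out by hand.
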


\begin{proof}
The result follows applying Theorem~\ref{ELeqMult} to
$$
{J^*}(y,u,p)=\int_a^b  H\lceil y,u,p \rceil(x)
-p(x) {^C_aD_x^\alpha}y(x) dx
$$
with respect to $y$, $u$ and $p$.
\end{proof}

In the particular case when $L$ does not depend
on ${_aI_x^\beta}y$ and $z$,
we obtain \cite[Theorem~3.5]{Gastao0}.

\begin{corollary}[Theorem~3.5 of \cite{Gastao0}]
Let $(y(x),u(x))$ be a solution of
$$
J(y,u)=\int_a^b L(x,y(x),u(x))dx \longrightarrow \min
$$
subject to the fractional control system
${^C_aD_x^\alpha}y(x)=f(x,y(x),u(x))$
and the boundary conditions $y(a)=y_a$ and $y(b)=y_b$.
Define the Hamiltonian by
$H(x,y,u,p)=L(x,y,u) + p f(x,y,u)$.
Then there exists a function $p$ for which
the triplet $(y,u,p)$ fulfill the Hamiltonian system
$$
\begin{cases}
{^C_aD_x^\alpha}y(x)=\frac{\partial H}{\partial p}(x,y(x),u(x),p(x)),\\
{_xD_b^\alpha}p(x)=\frac{\partial H}{\partial y}(x,y(x),u(x),p(x)),
\end{cases}
$$
and the stationary condition
$\frac{\partial H}{\partial u}(x,y(x),u(x),p(x))=0$.
\end{corollary}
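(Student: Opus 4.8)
The plan is to obtain this corollary as a direct specialization of the Hamiltonian theorem just proved for the fractional Lagrange problem with an indefinite integral. First I would observe that when $L=L(x,y,u)$ carries no dependence on ${_aI_x^\beta}y$ or on $z$, and the control system is ${^C_aD_x^\alpha}y(x)=f(x,y(x),u(x))$ with $f$ likewise independent of those two arguments, the Hamiltonian $H\lceil y,u,p\rceil(x)=L(x,y(x),u(x))+p(x)f(x,y(x),u(x))$ depends only on $(x,y,u,p)$. In particular the operator $\lceil y,u,p\rceil(x)=(x,y(x),u(x),{_aI_x^\beta}y(x),z(x),p(x))$ collapses, in every partial derivative that matters, to $(x,y(x),u(x),p(x))$, and one has $\partial H/\partial w\equiv 0$ and $\partial H/\partial z\equiv 0$.

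Next I would substitute these vanishings into the Hamiltonian system of the general theorem. Every term on the right-hand side of the costate equation that contains $\partial H/\partial w$ or a factor $\int_x^b \partial H/\partial z\lceil y,u,p\rceil(t)\,dt$ drops out: this removes the ${_xI_b^\beta}(\cdot)$ term, the plain indefinite-integral term, and both the ${_xD_b^\alpha}(\cdots)$ and ${_xI_b^\beta}(\cdots)$ terms built from $l$. Notice that the auxiliary Lagrangian $l$ and the variable $z$ thereby play no role whatsoever once $\partial H/\partial z=0$, so no hypothesis on $l$ is needed. What survives is precisely ${_xD_b^\alpha}p(x)=\partial H/\partial y(x,y(x),u(x),p(x))$. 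The state equation ${^C_aD_x^\alpha}y(x)=\partial H/\partial p(x,y(x),u(x),p(x))$ is inherited unchanged — and since $\partial H/\partial p=f$ it recovers the prescribed control system — while the stationary condition $\partial H/\partial u=0$ transcribes verbatim. The boundary conditions $y(a)=y_a$, $y(b)=y_b$ are carried along unaltered.

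I would then close by noting that this exhausts the content of the proof: the actual work — the fractional integration-by-parts formula \eqref{eq:frac:IBP}, the handling of the indefinite-integral terms, and the reduction of the constrained Lagrange problem to an unconstrained variational problem in the triplet $(y,u,p)$ via $J^*(y,u,p)=\int_a^b\bigl[H\lceil y,u,p\rceil(x)-p(x)\,{^C_aD_x^\alpha}y(x)\bigr]dx$ together with Theorem~\ref{ELeqMult} — has already been carried out in the theorem being specialized. There is no genuine obstacle here; the only point requiring a little care is the bookkeeping, namely checking that after the substitutions the reduced system matches \cite[Theorem~3.5]{Gastao0} exactly, in particular that the collapsed operator $\lceil\cdot\rceil$ produces the partial derivatives $\partial H/\partial y$, $\partial H/\partial u$, $\partial H/\partial p$ evaluated at $(x,y(x),u(x),p(x))$.
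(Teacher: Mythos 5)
Your proposal is correct and coincides with the paper's own (essentially unwritten) argument: the paper derives this corollary purely as the particular case of the preceding Hamiltonian theorem in which $L$ and $f$ do not depend on ${_aI_x^\beta}y$ and $z$, so that $\partial H/\partial w=\partial H/\partial z=0$ and all the extra terms in the costate equation vanish. Your version is in fact slightly more explicit than the paper, which states the specialization without spelling out the term-by-term cancellation.
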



\section{Sufficient conditions of optimality}
\label{sec:SufConditions}

To begin, let us recall the notions
of convexity and concavity for $C^1$ functions of several variables.

\begin{definition}
Given $k\in\{1,\ldots,n\}$ and a function
$\Psi:D\subseteq \mathbb{R}^n\to \mathbb{R}$
such that $\partial \Psi / \partial x_i$ exist and
are continuous for all $i\in\{k,\ldots,n\}$,
we say that $\Psi$ is convex (concave) in $(x_k,\ldots,x_n)$ if
\begin{multline*}
\Psi(x_1+\tau_1,\ldots,x_{k-1}+\tau_{k-1},x_k+\tau_k,
\ldots,x_n+\tau_n)-\Psi(x_1,\ldots,x_{k-1},x_k,\ldots,x_n)\\
\geq \, (\leq) \, \frac{\partial \Psi}{\partial x_k}(x_1,\ldots,x_{k-1},x_k,\ldots,x_n)\tau_k
+ \cdots + \frac{\partial \Psi}{\partial x_n}(x_1,\ldots,x_{k-1},x_k,\ldots,x_n)\tau_n
\end{multline*}
for all $(x_1,\ldots,x_n),(x_1+\tau_1,\ldots,x_n+\tau_n)\in D$.
\end{definition}

\begin{theorem}
Consider the functional $J$ as in \eqref{funct},
and let $y \in \mathcal{F}([a,b];\mathbb{R})$
be a solution of the fractional
Euler--Lagrange equation \eqref{ELeq}
satisfying the boundary conditions \eqref{bound}.
Assume that $L$ is convex in $(y,v,w,z)$.
If one of the two following conditions is satisfied,
\begin{enumerate}
\item $l$ is convex in $(y,v,w)$ and
$\frac{\partial L}{\partial z}[y](x) \geq 0$ for all $x \in [a,b]$;
\item $l$ is concave in $(y,v,w)$ and
$\frac{\partial L}{\partial z}[y](x) \leq 0$ for all $x \in [a,b]$;
\end{enumerate}
then $y$ is a (global) minimizer of problem \eqref{funct}--\eqref{bound}.
\end{theorem}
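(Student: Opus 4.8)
The plan is to show directly that $J(y+h) - J(y) \ge 0$ for every admissible variation $h \in \mathcal{F}([a,b];\mathbb{R})$ with $h(a)=h(b)=0$, exploiting convexity to bound the difference below by the first variation, which vanishes because $y$ solves \eqref{ELeq}. First I would write, for such an $h$, the pointwise bound coming from convexity of $L$ in $(y,v,w,z)$:
\begin{multline*}
L[y+h](x) - L[y](x) \ge \frac{\partial L}{\partial y}[y](x)h(x)
+ \frac{\partial L}{\partial v}[y](x)\,{^C_aD_x^\alpha}h(x)\\
+ \frac{\partial L}{\partial w}[y](x)\,{_aI_x^\beta}h(x)
+ \frac{\partial L}{\partial z}[y](x)\bigl(z_{y+h}(x)-z_y(x)\bigr),
\end{multline*}
where $z_{y+h}$ and $z_y$ denote the indefinite integrals associated to $y+h$ and $y$ respectively; here one uses linearity of the Caputo derivative and of the fractional integral, so that ${^C_aD_x^\alpha}(y+h) = {^C_aD_x^\alpha}y + {^C_aD_x^\alpha}h$ and similarly for ${_aI_x^\beta}$.

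Next I would control the last term. Under hypothesis (1), $l$ is convex in $(y,v,w)$, so
\begin{multline*}
z_{y+h}(x) - z_y(x) = \int_a^x \Bigl( l\{y+h\}(t) - l\{y\}(t)\Bigr)dt\\
\ge \int_a^x \left( \frac{\partial l}{\partial y}\{y\}(t)h(t)
+ \frac{\partial l}{\partial v}\{y\}(t)\,{^C_aD_t^\alpha}h(t)
+ \frac{\partial l}{\partial w}\{y\}(t)\,{_aI_t^\beta}h(t)\right)dt,
\end{multline*}
and since $\frac{\partial L}{\partial z}[y](x)\ge 0$ the inequality is preserved after multiplying and integrating; under hypothesis (2) both inequalities reverse and the product is again nonnegative, so in either case we obtain
\begin{multline*}
J(y+h)-J(y) \ge \int_a^b \Bigl[ \frac{\partial L}{\partial y}[y](x)h(x)
+ \frac{\partial L}{\partial v}[y](x)\,{^C_aD_x^\alpha}h(x)
+ \frac{\partial L}{\partial w}[y](x)\,{_aI_x^\beta}h(x)\\
+ \frac{\partial L}{\partial z}[y](x)\int_a^x\Bigl(
\frac{\partial l}{\partial y}\{y\}(t)h(t)
+ \frac{\partial l}{\partial v}\{y\}(t)\,{^C_aD_t^\alpha}h(t)
+ \frac{\partial l}{\partial w}\{y\}(t)\,{_aI_t^\beta}h(t)\Bigr)dt\Bigr]dx.
\end{multline*}
The right-hand side is exactly $j'(0)$ computed in the proof of Theorem~\ref{ELTEo}.

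Finally I would apply the integration-by-parts identities displayed in the proof of Theorem~\ref{ELTEo} to rewrite this first variation as $\int_a^b (\mathrm{ELE})(x)\,h(x)\,dx$ plus boundary terms; the boundary terms vanish because $h(a)=h(b)=0$ (and, for the fractional pieces, because $h$ is multiplied by ${_xI_b^{1-\alpha}}(\cdots)$ evaluated at $a$ and $b$, with $h$ itself being zero there), and the integrand $(\mathrm{ELE})(x)$ is identically zero since $y$ satisfies \eqref{ELeq}. Hence $J(y+h)-J(y)\ge 0$ for all admissible $h$, which is precisely the statement that $y$ is a global minimizer. The main obstacle I anticipate is bookkeeping rather than conceptual: one must make sure the convexity inequality for $l$ is applied under the integral sign in $t$ \emph{before} it is weighted by $\frac{\partial L}{\partial z}[y](x)$ and integrated in $x$, so that the sign condition on $\frac{\partial L}{\partial z}$ is used correctly in both cases, and one must check that all the fractional integration-by-parts manipulations are legitimate for $h \in \mathcal{F}([a,b];\mathbb{R})$ (which is guaranteed by the same regularity used in Theorem~\ref{ELTEo}).
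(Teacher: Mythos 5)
Your proposal is correct and follows essentially the same route as the paper: bound $J(y+h)-J(y)$ from below by the first variation using convexity of $L$ (with the convexity or concavity of $l$ and the sign of $\frac{\partial L}{\partial z}$ controlling the $z$-term), then apply the integration-by-parts identities from the proof of Theorem~\ref{ELTEo} to identify the first variation with $\int_a^b(\mathrm{ELE})(x)h(x)\,dx=0$. If anything, you are more explicit than the paper about the order in which the inequality for $l$ is applied under the inner integral before being weighted by $\frac{\partial L}{\partial z}$, a step the paper compresses into a single displayed inequality.
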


\begin{proof}
Consider $h$ of class $\mathcal{F}([a,b];\mathbb{R})$
such that $h(a)=h(b)=0$. Then,
\begin{equation*}
\begin{split}
J(y+h) & - J(y) = \int_a^b
L\Biggl(x,y(x)+h(x),{^C_aD_x^\alpha}y(x)
+{^C_aD_x^\alpha}h(x),{_aI_x^\beta}y(x)+{_aI_x^\beta}h(x),\\
&\qquad\qquad\qquad\qquad \int_a^x l(t,y(t)+h(t),{^C_aD_t^\alpha}y(t)
+{^C_aD_t^\alpha}h(t),{_aI_t^\beta}y(t)+{_aI_t^\beta}h(t))dt \Biggr)dx\\
& \quad -\int_a^b L(x,y(x),{^C_aD_x^\alpha}y(x),
{_aI_x^\beta}y(x),\int_a^x l(t,y(t),{^C_aD_t^\alpha}y(t),{_aI_t^\beta}y(t))dt)dx\\
&\geq \int_a^b \left[ \frac{\partial L}{\partial y}[y](x)h(x)
+\frac{\partial L}{\partial v}[y](x){^C_aD^\alpha_x}h(x)
+\frac{\partial L}{\partial w}[y](x){_aI^\beta_x}h(x)\right.\\
&\quad \left. +\frac{\partial L}{\partial z}[y](x)
\int_a^x\left( \frac{\partial l}{\partial y}\{y\}(t)h(t)
+\frac{\partial l}{\partial v}\{y\}(t){^C_aD^\alpha_t}h(t)
+\frac{\partial l}{\partial w}\{y\}(t){_aI^\beta_t}h(t)\right)dt\right]dx\\
&= \int_a^b \left[ \frac{\partial L}{\partial y}[y](x)
+{_xD^\alpha_b}\left( \frac{\partial L}{\partial v}[y](x) \right)
+{_xI_b^\beta}\left(\frac{\partial L}{\partial w}[y](x)\right)
+\int_x^b \frac{\partial L}{\partial z}[y](t)dt
\cdot \frac{\partial l}{\partial y}\{y\}(x)\right.\\
&\quad \left. +{_xD^\alpha_b}\left(
\int_x^b \frac{\partial L}{\partial z}[y](t)dt
\cdot \frac{\partial l}{\partial v}\{y\}(x)  \right)
+{_xI^\beta_b}\left( \int_x^b \frac{\partial L}{\partial z}[y](t)dt
\cdot \frac{\partial l}{\partial w}\{y\}(x)  \right)\right] h(x)dx
= 0.
\end{split}
\end{equation*}
\end{proof}

One can easily include the case when the
boundary conditions \eqref{bound} are not given.

\begin{theorem}
Consider functional $J$ as in \eqref{funct}
and let $y \in \mathcal{F}([a,b];\mathbb{R})$
be a solution of the fractional Euler--Lagrange
equation \eqref{ELeq} and the fractional natural boundary condition
\eqref{NaturalBoundCond}. Assume
that $L$ is convex in $(y,v,w,z)$. If one
of the two next conditions is satisfied,
\begin{enumerate}
\item $l$ is convex in $(y,v,w)$ and
$\frac{\partial L}{\partial z}[y](x) \geq 0$ for all $x \in [a,b]$;
\item $l$ is concave in $(y,v,w)$ and
$\frac{\partial L}{\partial z}[y](x) \leq 0$ for all $x \in [a,b]$;
\end{enumerate}
then $y$ is a (global) minimizer of \eqref{funct}.
\end{theorem}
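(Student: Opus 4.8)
The plan is to mirror the proof of the preceding theorem, the only difference being that the boundary term produced by fractional integration by parts is no longer killed by the condition $h(a)=h(b)=0$, but is instead annihilated by the natural boundary condition \eqref{NaturalBoundCond} together with Remark~\ref{remarkIntegral}. So first I would take an arbitrary $h \in \mathcal{F}([a,b];\mathbb{R})$ with \emph{no} constraint at the endpoints, write out $J(y+h)-J(y)$, and apply the convexity hypothesis on $L$ in $(y,v,w,z)$ exactly as in the previous proof to get
$$
J(y+h)-J(y) \geq \int_a^b \left[ \frac{\partial L}{\partial y}[y](x) h(x) + \frac{\partial L}{\partial v}[y](x)\,{^C_aD^\alpha_x}h(x) + \frac{\partial L}{\partial w}[y](x)\,{_aI^\beta_x}h(x) + \frac{\partial L}{\partial z}[y](x)\,\delta z(x)\right]dx,
$$
where $\delta z(x)=\int_a^x\bigl(\frac{\partial l}{\partial y}\{y\}(t)h(t)+\frac{\partial l}{\partial v}\{y\}(t)\,{^C_aD^\alpha_t}h(t)+\frac{\partial l}{\partial w}\{y\}(t)\,{_aI^\beta_t}h(t)\bigr)dt$; this last step is where conditions 1 or 2 enter, since one needs $\frac{\partial L}{\partial z}[y](x)\cdot\delta z(x)$ to be bounded below by its linearization, and the sign of $\frac{\partial L}{\partial z}$ must match the convexity/concavity direction of $l$.

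Next I would reorganize the right-hand side using the four integration-by-parts identities already established in the proof of Theorem~\ref{ELTEo} — the fractional IBP formula \eqref{eq:frac:IBP} for the ${^C_aD^\alpha_x}h$ terms, the adjointness of $_aI^\beta_x$ and $_xI^\beta_b$ for the ${_aI^\beta_x}h$ terms, and the Fubini/interchange-of-order trick for the $\delta z$ terms. Unlike in Theorem~\ref{ELTEo}, I would keep the boundary contributions, collecting them as
$$
\left[{_xI_b^{1-\alpha}}\left(\frac{\partial L}{\partial v}[y](x)+\int_x^b\frac{\partial L}{\partial z}[y](t)\,dt\cdot\frac{\partial l}{\partial v}\{y\}(x)\right)h(x)\right]_a^b.
$$
At $x=b$ this term vanishes by Remark~\ref{remarkIntegral} (the argument of $_xI_b^{1-\alpha}$ is continuous, so its limit as $x\to b$ is $0$), and at $x=a$ it vanishes by hypothesis \eqref{NaturalBoundCond}. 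Hence the entire boundary contribution is zero.

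What remains is exactly $\int_a^b(\text{LHS of }\eqref{ELeq})\cdot h(x)\,dx$, which is $0$ because $y$ solves the Euler--Lagrange equation \eqref{ELeq}. Therefore $J(y+h)-J(y)\geq 0$ for every admissible perturbation $h$, i.e. $y$ is a global minimizer. I do not anticipate a genuine obstacle here: the computation is word-for-word the one in the previous two theorems, and the only new ingredient is observing that the two pieces of the boundary term die for two different reasons (continuity at $b$, the assumed natural boundary condition at $a$). The one point requiring a little care is making sure the convexity inequality for the composite term $L(\ldots,z+\delta z)$ is applied correctly — namely that convexity of $L$ in its last slot plus the sign condition on $\partial L/\partial z$ plus convexity (resp. concavity) of $l$ combine to give $\frac{\partial L}{\partial z}[y](x)\,\delta z(x)$ as a lower bound for the corresponding increment — but this is precisely the same bookkeeping as in the proof of the preceding theorem and needs no new idea.
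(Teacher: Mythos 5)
Your proposal is correct and follows exactly the argument the paper intends: the paper omits the proof of this theorem, remarking only that it is an easy extension of the preceding one, and your filling-in — repeating the convexity estimate, retaining the boundary term $\left[{_xI_b^{1-\alpha}}\left(\frac{\partial L}{\partial v}[y](x)+\int_x^b\frac{\partial L}{\partial z}[y](t)\,dt\cdot\frac{\partial l}{\partial v}\{y\}(x)\right)h(x)\right]_a^b$, and killing it at $x=b$ via Remark~\ref{remarkIntegral} and at $x=a$ via \eqref{NaturalBoundCond} — is precisely that extension. No gaps.
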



\section{Numerical simulations}
\label{sec:NumSim}

Solving a variational problem usually means solving
Euler--Lagrange differential equations
subject to some boundary conditions. It turns out that
most fractional Euler--Lagrange equations cannot be solved analytically.
Therefore, in practical terms, numerical methods need to be developed and used
in order to solve the fractional variational problems.
A numerical scheme to solve fractional Lagrange problems
has been presented in \cite{AgrawalNum}. The method is based on approximating
the problem to a set of algebraic equations using some basis functions.
A more general approach can be found in \cite{Tricaud} that uses
the Oustaloup recursive approximation of the fractional derivative,
and reduces the problem to an integer order (classical) optimal control problem.
A similar approach is presented in \cite{Jelicic}, using an expansion
formula for the left Riemann--Liouville fractional derivative
developed in \cite{Atan}. Here we use a modified approximation
(see Remark~\ref{rem:ns}) based on the same expansion,
to reduce a given fractional problem to a classical one.
The expansion formula is given in the following lemma.

\begin{lemma}[\textrm{cf.} equation (12) of \cite{Atan}]
Suppose that $f\in AC^2[0,b]$, ${f''}\in L_1[0,b]$ and $0<\alpha\leq 1$.
Then the left Riemann--Liouville fractional derivative can be expanded as
\begin{equation*}
{_0D_x^\alpha} f(x)=A(\alpha)x^{-\alpha}f(x)+B(\alpha)x^{1-\alpha}{f'}(x)
-\sum_{k=2}^{\infty}C(k,\alpha)x^{1-k-\alpha}v_k(x),
\end{equation*}
where
\begin{eqnarray*}
{v'}_k(x)&=&(1-k)x^{k-2}f(x),\qquad v_k(0)=0, \qquad k=2,3,\ldots,\\
A(\alpha)&=&\frac{1}{\Gamma(1-\alpha)}
-\frac{1}{\Gamma(2-\alpha)\Gamma(\alpha-1)}
\sum_{k=2}^{\infty}\frac{\Gamma(k-1+\alpha)}{(k-1)!},\\
B(\alpha)&=&\frac{1}{\Gamma(2-\alpha)}\left[1
+\sum_{k=1}^{\infty}\frac{\Gamma(k-1+\alpha)}{\Gamma(\alpha-1)k!}\right],\\
C(k,\alpha)&=&\frac{1}{\Gamma(2-\alpha)\Gamma(\alpha-1)}\frac{\Gamma(k-1+\alpha)}{(k-1)!}.
\end{eqnarray*}
\end{lemma}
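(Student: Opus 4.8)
The plan is to establish the expansion as in \cite{Atan}, by reducing the Riemann--Liouville derivative to an ordinary integral of $f''$, expanding the kernel in a binomial series, and integrating term by term. First note that the case $\alpha=1$ is degenerate: the factors $1/\Gamma(\alpha-1)=1/\Gamma(0)$ that appear in $A$ and $C$ vanish, $B(1)=1/\Gamma(1)=1$, and the formula reduces to ${_0D_x^1}f=f'$; so assume $0<\alpha<1$. Starting from ${_0D_x^\alpha}f(x)=\frac{1}{\Gamma(1-\alpha)}\frac{d}{dx}\int_0^x(x-t)^{-\alpha}f(t)\,dt$, I would integrate by parts once (using $f\in AC$), differentiate under the integral sign (legitimate since $1-\alpha>0$ makes $(x-t)^{1-\alpha}$ continuous up to $t=x$ and $(x-t)^{-\alpha}f'(t)$ integrable), and integrate by parts a second time (using $f\in AC^2$ and $f''\in L_1$), obtaining
\[
{_0D_x^\alpha}f(x)=\frac{f(0)}{\Gamma(1-\alpha)}x^{-\alpha}+\frac{f'(0)}{\Gamma(2-\alpha)}x^{1-\alpha}+\frac{1}{\Gamma(2-\alpha)}\int_0^x(x-t)^{1-\alpha}f''(t)\,dt ,
\]
which is the Riemann--Liouville--Caputo relation followed by one integration by parts of the Caputo part.

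Next I would expand $(x-t)^{1-\alpha}=\sum_{p\ge0}\binom{1-\alpha}{p}(-1)^p\,x^{1-\alpha-p}t^p$, where $\binom{1-\alpha}{p}(-1)^p=\Gamma(\alpha-1+p)/(p!\,\Gamma(\alpha-1))$. Since $1-\alpha\in(0,1)$ gives $\bigl|\binom{1-\alpha}{p}\bigr|=O(p^{\alpha-2})$, this series converges absolutely and uniformly in $t\in[0,x]$, and together with $f''\in L_1$ this justifies exchanging sum and integral:
\[
\int_0^x(x-t)^{1-\alpha}f''(t)\,dt=\sum_{p\ge0}\frac{\Gamma(\alpha-1+p)}{p!\,\Gamma(\alpha-1)}\,x^{1-\alpha-p}\int_0^x t^p f''(t)\,dt .
\]
For $p=0,1$ a single integration by parts writes $\int_0^x t^p f''$ in terms of $f,f'$ at $0$ and $x$; for $p\ge2$, two integrations by parts give $\int_0^x t^p f''(t)\,dt=x^p f'(x)-p\,x^{p-1}f(x)+p(p-1)\int_0^x t^{p-2}f(t)\,dt$, and the last integral equals $-v_p(x)/(p-1)$ by the defining conditions $v_p'(t)=(1-p)t^{p-2}f(t)$, $v_p(0)=0$, so the $p\ge2$ term equals $x^p f'(x)-p\,x^{p-1}f(x)-p\,v_p(x)$.

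It then remains to substitute back and collect the terms according to the factors $x^{-\alpha}f(x)$, $x^{1-\alpha}f'(x)$, $x^{1-k-\alpha}v_k(x)$, and the two boundary terms in $f(0)$ and $f'(0)$. Using $\Gamma(2-\alpha)=(1-\alpha)\Gamma(1-\alpha)$, $\Gamma(\alpha)=(\alpha-1)\Gamma(\alpha-1)$ and $p\,\Gamma(\alpha-1+p)/p!=\Gamma(\alpha-1+p)/(p-1)!$, one checks that: (i) the $f'(0)$-term produced by $p=0$ cancels the explicit $\frac{f'(0)}{\Gamma(2-\alpha)}x^{1-\alpha}$; (ii) the $f(0)$-term produced by $p=1$ cancels the explicit $\frac{f(0)}{\Gamma(1-\alpha)}x^{-\alpha}$; (iii) the coefficient of $x^{1-\alpha}f'(x)$ equals $B(\alpha)$; (iv) the coefficient of $x^{-\alpha}f(x)$ equals $A(\alpha)$; (v) the $v_k$-term ($k\ge2$) comes out as $-C(k,\alpha)x^{1-k-\alpha}v_k(x)$. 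This gives the claimed identity.

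The step I expect to be the main obstacle is the last one, and specifically the convergence bookkeeping: when the single convergent series of the second step is split into the pieces producing $A(\alpha)$, $B(\alpha)$ and the $v_k$-series, those pieces are not all absolutely convergent in isolation (for instance $\sum_p p\binom{1-\alpha}{p}(-1)^p$ diverges), so the $x^{-\alpha}f(x)$-term and the tail $\sum_{k\ge2}C(k,\alpha)x^{1-k-\alpha}v_k(x)$ have to be kept coupled --- their partial sums recombine to the finite moment integral $\int_0^x(x-t)^{1-\alpha}f''$, via $x^{p-1}f(x)+v_p(x)=\int_0^x t^{p-1}f'(t)\,dt$ --- and only after fixing this grouping may one read off the constants. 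Everything else (the two integrations by parts, the differentiation under the integral, the term-by-term integration) is routine, the hypotheses $f\in AC^2$, $f''\in L_1$ and $\alpha<1$ being precisely what legitimizes these operations, the last guaranteeing absolute convergence of the binomial series on the closed interval $[0,x]$.
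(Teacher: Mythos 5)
The paper offers no proof of this lemma: it is imported verbatim from \cite{Atan} (their equation~(12)), so there is no internal argument to compare yours against. Your reconstruction follows the same route as that reference: two integrations by parts to reach ${_0D_x^\alpha}f(x)=\frac{f(0)}{\Gamma(1-\alpha)}x^{-\alpha}+\frac{f'(0)}{\Gamma(2-\alpha)}x^{1-\alpha}+\frac{1}{\Gamma(2-\alpha)}\int_0^x(x-t)^{1-\alpha}f''(t)\,dt$, binomial expansion of the kernel (absolutely and uniformly convergent on $[0,x]$ precisely because $1-\alpha>0$), termwise integration, and two further integrations by parts per term to express everything through $f(x)$, $f'(x)$ and the moments $v_k$. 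I checked your identities: the coefficient $(-1)^p\binom{1-\alpha}{p}=\Gamma(\alpha-1+p)/(p!\,\Gamma(\alpha-1))$, the cancellation of the $f(0)$ and $f'(0)$ boundary terms against the $p=1$ and $p=0$ contributions, and the identification of $B(\alpha)$ and $C(k,\alpha)$ all come out correctly.

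The one real issue is the one you flag at the end, and it deserves to be stated more bluntly: the series $\sum_{k\geq 2}\Gamma(k-1+\alpha)/(k-1)!$ appearing in $A(\alpha)$ has terms asymptotic to $k^{\alpha-1}$, hence diverges for every $\alpha\in(0,1)$. So $A(\alpha)$ as written is not a well-defined real number, and the lemma read literally cannot be proved; only the version in which the $x^{-\alpha}f(x)$ contribution stays coupled to the $v_k$-tail (via $x^{p-1}f(x)+v_p(x)=\int_0^x t^{p-1}f'(t)\,dt$, exactly as you propose), or the truncated approximation \eqref{expanMom} that the paper actually uses in its numerical section, is meaningful. Your remark that the $f'(x)$-coefficient is a genuinely convergent series is consistent with the paper's own Remark~\ref{rem:ns}, which records that this series sums exactly to zero, i.e.\ $B(\alpha)=0$. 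In short: your derivation is the intended one and is honest about the obstruction; the defect lies in the statement inherited from \cite{Atan}, not in your argument.
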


In practice, we only keep a finite number of terms
in the series. We use the approximation
\begin{equation}
\label{expanMom}
{_0D_x^\alpha} f(x)\simeq A(\alpha,N)x^{-\alpha}f(x)
+B(\alpha,N)x^{1-\alpha}{f'}(x)-\sum_{k=2}^{N}C(k,\alpha)x^{1-k-\alpha}v_k(x)
\end{equation}
for some fixed number $N$, where
\begin{eqnarray*}
A(\alpha,N)&=&\frac{1}{\Gamma(1-\alpha)}
-\frac{1}{\Gamma(2-\alpha)\Gamma(\alpha-1)}
\sum_{k=2}^{N}\frac{\Gamma(k-1+\alpha)}{(k-1)!},\\
B(\alpha,N)&=&\frac{1}{\Gamma(2-\alpha)}\left[1
+\sum_{k=1}^{N}\frac{\Gamma(k-1+\alpha)}{\Gamma(\alpha-1)k!}\right].
\end{eqnarray*}

\begin{remark}
\label{rem:ns}
In \cite{Atan} the authors use the fact that
$1+\sum_{k=1}^{\infty}\frac{\Gamma(k-1+\alpha)}{\Gamma(\alpha-1)k!}=0$,
and apply in their method the approximation
\begin{equation*}
\label{expanAtan}
{_0D_x^\alpha} f(x)\simeq A(\alpha,N)x^{-\alpha}f(x)
-\sum_{k=2}^NC(k,\alpha)x^{1-k-\alpha}v_k(x).
\end{equation*}
Regarding the value of $B(\alpha,N)$ for some values of $N$ (see Table~\ref{tab}),
we take the first derivative in the expansion into account
and keep the approximation in the form of equation \eqref{expanMom}.
\begin{table}[h!]
\center
\begin{tabular}{|c|c c c c c c c|}
\hline
$N$         &    4   &    7   &   15   &   30   &   70   &  120   &  170   \\
\hline
$B(0.3,N)$  & 0.1357 & 0.0928 & 0.0549 & 0.0339 & 0.0188 & 0.0129 & 0.0101 \\
$B(0.5,N)$  & 0.3085 & 0.2364 & 0.1630 & 0.1157 & 0.0760 & 0.0581 & 0.0488 \\
$B(0.7,N)$  & 0.5519 & 0.4717 & 0.3783 & 0.3083 & 0.2396 & 0.2040 & 0.1838 \\
$B(0.9,N)$  & 0.8470 & 0.8046 & 0.7481 & 0.6990 & 0.6428 & 0.6092 & 0.5884 \\
\hline
\end{tabular}
\caption{Values of $B(\alpha,N)$ for
$\alpha\in\{0.3,0.5,0.7,0.9\}$ and different values of $N$.}
\label{tab}
\end{table}
\end{remark}

We illustrate with Examples~\ref{example:bra} and \ref{example1}
how the approximation \eqref{expanMom} provides an accurate
and efficient numerical method to solve fractional variational problems.

\begin{example}
\label{ex:new:sec6}
We obtain an approximated solution to the problem
considered in Example~\ref{example:bra}.
Since $y(0)=0$, the Caputo derivative coincides
with the Riemann--Liouville derivative
and we can approximate the fractional
problem using \eqref{expanMom}.
We reformulate the problem using the Hamiltonian formalism by
letting $^C_0D_x^{\alpha}y(x)=u(x)$. Then,
\begin{equation}
\label{eq:h:1}
A(\alpha,N)x^{-\alpha}y(x)+B(\alpha,N)x^{1-\alpha}{y'}(x)
-\sum_{k=2}^{N}C(k,\alpha)x^{1-k-\alpha}v_k(x)=u(x).
\end{equation}
We also include the variable $z(x)$ with
$$
{z'}(x)=\left( y(x)-x^{\alpha+1}\right)^2.
$$
In summary, one has the following Lagrange problem:
\begin{equation}
\label{AppExample2}
\begin{gathered}
\tilde{J}(y) = \int_0^1 [(u(x)-\Gamma(\alpha+2)x)^2+z(x)]dx \longrightarrow \min \\
\begin{cases}
{y'}(x) = -AB^{-1}x^{-1}y(x)+\sum_{k=2}^{N}B^{-1}C_kx^{-k}v_k(x)+B^{-1}x^{\alpha-1}u(x)\\
{v'}_k(x) = (1-k)x^{k-2}y(x),\qquad k=1,2,\ldots\\
{z'}(x) = \left( y(x)-x^{\alpha+1}\right)^2
\end{cases}
\end{gathered}
\end{equation}
subject to the boundary conditions $y(0)=0$, $z(0)=0$ and $v_k(0)=0$, $k=1,2,\ldots$
Setting $N=2$, the Hamiltonian is given by
\begin{multline*}
H=-[(u(x)-\Gamma(\alpha+2)x)^2+z(x)]+p_1(x)\left(-AB^{-1}x^{-1}y(x)+B^{-1}C_2x^{-2}v_2(x)
+B^{-1}x^{\alpha-1}u(x)\right)\\
-p_2(x)y(x)+p_3(x)\left( y(x)-x^{\alpha+1}\right)^2.
\end{multline*}
Using the classical necessary optimality condition for problem \eqref{AppExample2},
we end up with the following two point boundary value problem:
\begin{equation}
\label{sys2}
\left\{
\begin{array}{ll}
{y'}(x)   & =-AB^{-1}x^{-1}y(x)+ B^{-1}C_2x^{-2}v_2(x)
+\frac{1}{2}B^{-2}x^{2\alpha-2}p_1(x)+\Gamma(\alpha+2)B^{-1}x^{\alpha}\\
{v'_2}(x) & =-y(x)\\
{z'}(x)   & =(y(x)-x^{\alpha+1})^2\\
{p'_1}(x) & =AB^{-1}x^{-1}p_1(x)+p_2(x)-2p_3(x)(y(x)-x^{\alpha+1})\\
{p'_2}(x) & =-B^{-1}C_2x^{-2}p_1(x)\\
{p'_3}(x) & = 1
\end{array}
\right.
\end{equation}
subject to the boundary conditions
\begin{equation}
\label{sysB2}
\begin{cases}
y(0)=0 \\
v_2(0)=0\\
z(0)=0
\end{cases}
\qquad
\begin{cases}
y(1)=1\\
p_2(1)=0\\
p_3(1)=0.
\end{cases}
\end{equation}
We solved system \eqref{sys2} subject to \eqref{sysB2}
using the MATLAB$^\circledR$ built-in function \texttt{bvp4c}.
The resulting graph for $y(x)$, together with the corresponding
value of $J$, is given in Figure~\ref{Fig2}.
\begin{figure}[!ht]
\begin{center}
\includegraphics[scale=.8]{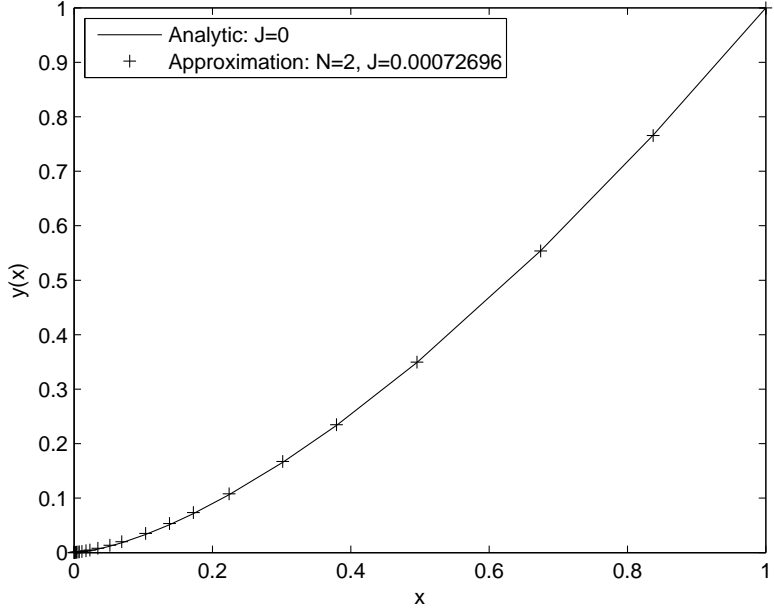}
\caption{Analytic vs numerical solution
to problem of Example~\ref{example:bra}.}\label{Fig2}
\end{center}
\end{figure}
\end{example}

Our numerical method works well, even in the case
the minimizer is not a Lipschitz function.

\begin{example}
An approximated solution to the problem
considered in Example~\ref{example1} can be obtained
following exactly the same steps as in Example~\ref{ex:new:sec6}.
Recall that the minimizer \eqref{eq:gs:ex} to that
problem is not a Lipschitz function.
As before, one has $y(0)=0$ and the Caputo derivative coincides
with the Riemann--Liouville derivative. We approximate the fractional
problem using \eqref{expanMom}.
Let $^C_0D_x^{\alpha}y(x)=u(x)$. Then \eqref{eq:h:1} holds.
In this case the variable $z(x)$ satisfies
$$
{z'}(x)=\left( y(x)-\frac{x^\alpha}{\Gamma(\alpha+1)}\right)^2
$$
and we approximate the fractional variational problem
with the following classical one:
\begin{equation*}
\begin{gathered}
\tilde{J}(y) = \int_0^1 [(u(x)-1)^2+z(x)]dx \longrightarrow \min \\
\begin{cases}
{y'}(x) = -AB^{-1}x^{-1}y(x)+\sum_{k=2}^{N}B^{-1}C_kx^{-k}v_k(x)+B^{-1}x^{\alpha-1}u(x)\\
{v'}_k(x) = (1-k)x^{k-2}y(x),\qquad k=1,2,\ldots\\
{z'}(x) = \left( y(x)-\frac{x^\alpha}{\Gamma(\alpha+1)}\right)^2
\end{cases}
\end{gathered}
\end{equation*}
subject to the boundary conditions $y(0)=0$, $z(0)=0$ and $v_k(0)=0$, $k=1,2,\ldots$
Setting $N=2$, the Hamiltonian is given by
\begin{multline*}
H=-[(u(x)-1)^2+z(x)]+p_1(x)\left(-AB^{-1}x^{-1}y(x)+B^{-1}C_2x^{-2}v_2(x)
+B^{-1}x^{\alpha-1}u(x)\right)\\
-p_2(x)y(x)+p_3(x)\left( y(x)-\frac{x^\alpha}{\Gamma(\alpha+1)}\right)^2.
\end{multline*}
The classical theory \cite{MR0166037} tell us to solve the system
\begin{equation}
\label{sys}
\left\{
\begin{array}{ll}
{y'}(x)   & =-AB^{-1}x^{-1}y(x)+ B^{-1}C_2x^{-2}v_2(x)
+\frac{1}{2}B^{-2}x^{2\alpha-2}p_1(x)+B^{-1}x^{\alpha-1}\\
{v'_2}(x) & =-y(x)\\
{z'}(x)   & =(y(x)-\frac{x^\alpha}{\Gamma(\alpha+1)})^2\\
{p'_1}(x) & =AB^{-1}x^{-1}p_1(x)+p_2(x)-2p_3(x)(y(x)
-\frac{x^\alpha}{\Gamma(\alpha+1)})\\
{p'_2}(x) & =-B^{-1}C_2x^{-2}p_1(x)\\
{p'_3}(x) & = 1
\end{array}
\right.
\end{equation}
subject to boundary conditions
\begin{equation}
\label{sysB}
\begin{cases}
y(0)=0 \\
v_2(0)=0\\
z(0)=0
\end{cases}
\qquad
\begin{cases}
y(1)=\frac{1}{\Gamma(\alpha+1)}\\
p_2(1)=0\\
p_3(1)=0.
\end{cases}
\end{equation}
As done in Example~\ref{ex:new:sec6}, we solved \eqref{sys}--\eqref{sysB}
using the MATLAB$^\circledR$ built-in function \texttt{bvp4c}.
The resulting graph for $y(x)$, together with the corresponding
value of $J$, is given in Figure~\ref{Fig} in contrast with
the exact minimizer \eqref{eq:gs:ex}.
\begin{figure}[!ht]
\begin{center}
\includegraphics[scale=.8]{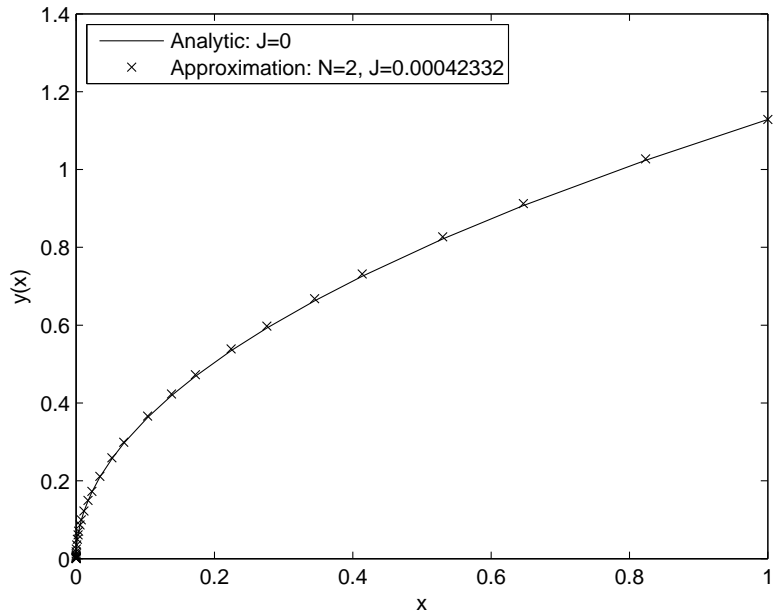}
\caption{Analytic vs numerical solution
to problem of Example~\ref{example1}.}\label{Fig}
\end{center}
\end{figure}
\end{example}


\section*{Acknowledgments}

Work supported by the \emph{Portuguese Foundation for Science and Technology} (FCT),
through the {\it Center for Research and Development in Mathematics and Applications}
(CIDMA) and the Ph.D. fellowship SFRH/BD/33761/2009 (Shakoor Pooseh).
The authors are very grateful to a referee
for valuable remarks and comments, which
significantly contributed to the quality of the paper.


{\small
}



\begin{thebibliography}{99}

\bibitem{AgrawalNum}
O. P. Agrawal,
A general formulation and solution scheme
for fractional optimal control problems,
Nonlinear Dynam. {\bf 38} (2004), no.~1-4, 323--337.

\bibitem{AGRA1}
O. P. Agrawal,
Generalized Euler-Lagrange equations and transversality
conditions for FVPs in terms of the Caputo derivative,
J. Vib. Control {\bf 13} (2007), no.~9-10, 1217--1237.

\bibitem{MR2345467}
O. P. Agrawal,
Fractional variational calculus in terms of Riesz fractional derivatives,
J. Phys. A {\bf 40} (2007), no.~24, 6287--6303.

\bibitem{MyID:182}
R. Almeida, A. B. Malinowska\ and\ D. F. M. Torres,
A fractional calculus of variations for multiple integrals
with application to vibrating string,
J. Math. Phys. 51 (2010), no.~3, 033503, 12 pp.
{\tt arXiv:1001.2722}

\bibitem{Ankara:Ric}
R. Almeida, A. B. Malinowska\ and\ D. F. M. Torres,
Fractional Euler-Lagrange differential equations via Caputo derivatives,
in {\it Fractional Dynamics and Control}
(eds: D. Baleanu, J. A. Tenreiro Machado, and A. Luo),
Springer, in press.

\bibitem{Almeida2}
R. Almeida\ and\ D. F. M. Torres,
H\"olderian variational problems subject to integral constraints,
J. Math. Anal. Appl. {\bf 359} (2009), no.~2, 674--681.
{\tt arXiv:0807.3076}

\bibitem{Almeida3}
R. Almeida\ and\ D. F. M. Torres,
Isoperimetric problems on time scales with nabla derivatives,
J. Vib. Control {\bf 15} (2009), no.~6, 951--958.
{\tt arXiv:0811.3650}

\bibitem{Almeida1}
R. Almeida\ and\ D. F. M. Torres,
Calculus of variations with fractional
derivatives and fractional integrals,
Appl. Math. Lett. {\bf 22} (2009), no.~12, 1816--1820.
{\tt arXiv:0907.1024}

\bibitem{MyID:154}
R. Almeida and D. F. M. Torres,
Leitmann's direct method for fractional optimization problems,
Appl. Math. Comput. 217 (2010), no.~3, 956--962.
{\tt arXiv:1003.3088}

\bibitem{Almeida}
R. Almeida\ and\ D. F. M. Torres,
Necessary and sufficient conditions for the fractional calculus
of variations with Caputo derivatives,
Commun. Nonlinear Sci. Numer. Simulat. {\bf 16} (2011), no.~3, 1490--1500.
{\tt arXiv:1007.2937}

\bibitem{Atan}
T. M. Atanackovic\ and\ B. Stankovic,
On a numerical scheme for solving differential
equations of fractional order,
Mech. Res. Comm. {\bf 35} (2008), no.~7, 429--438.

\bibitem{MyID:179}
N. R. O. Bastos, R. A. C. Ferreira\ and\ D. F. M. Torres,
Discrete-time fractional variational problems,
Signal Processing {\bf 91} (2011), no.~3, 513--524.
{\tt arXiv:1005.0252}

\bibitem{MyID:194}
A. M. C. Brito da Cruz, N. Martins\ and\ D. F. M. Torres,
Higher-order Hahn's quantum variational calculus,
Nonlinear Anal. (2011), in press.
DOI: 10.1016/j.na.2011.01.015
{\tt arXiv:1101.3653}

\bibitem{El-Nabulsi1}
R. A. El-Nabulsi\ and\ D. F. M. Torres,
Necessary optimality conditions for fractional action-like integrals
of variational calculus with Riemann-Liouville derivatives of order
$(\alpha,\beta)$, Math. Methods Appl. Sci. {\bf 30} (2007), no.~15, 1931--1939.
{\tt arXiv:math-ph/0702099}

\bibitem{El-Nabulsi2}
R. A. El-Nabulsi\ and\ D. F. M. Torres,
Fractional actionlike variational problems,
J. Math. Phys. {\bf 49} (2008), no.~5, 053521, 7 pp.
{\tt arXiv:0804.4500}

\bibitem{ferreira}
R. A. C. Ferreira\ and\ D. F. M. Torres,
Higher-order calculus of variations on time scales,
in {\it Mathematical control theory and finance}
(eds: A. Sarychev, A. Shiryaev, M. Guerra, and M. do R. Grossinho),
149--159, Springer, Berlin, 2008.
{\tt arXiv:0706.3141}

\bibitem{iso:ts}
R. A. C. Ferreira\ and\ D. F. M. Torres,
Isoperimetric problems of the calculus of variations on time scales,
in {\it Nonlinear Analysis and Optimization II}
(eds: A.~Leizarowitz, B.~S.~Mordukhovich, I.~Shafrir, and A.~J.~Zaslavski),
Contemporary Mathematics, vol.~514, Amer. Math. Soc., Providence, RI, 2010, pp.~123--131.
{\tt arXiv:0805.0278}

\bibitem{MyID:191}
R. A. C. Ferreira\ and\ D. F. M. Torres,
Fractional $h$-difference equations arising from the calculus of variations,
Appl. Anal. Discrete Math. {\bf 5} (2011), in press.
DOI: 10.2298/AADM110131002F
{\tt arXiv:1101.5904}

\bibitem{fraser}
C. G. Fraser,
Isoperimetric problems in the variational
calculus of Euler and Lagrange,
Historia Math. {\bf 19} (1992), no.~1, 4--23.

\bibitem{gastao:delfim}
G. S. F. Frederico\ and\ D. F. M. Torres,
A formulation of Noether's theorem for fractional problems
of the calculus of variations,
J. Math. Anal. Appl. {\bf 334} (2007), no.~2, 834--846.
{\tt arXiv:math/0701187}

\bibitem{gasta1}
G. S. F. Frederico\ and\ D. F. M. Torres,
Fractional conservation laws in optimal control theory,
Nonlinear Dynam. {\bf 53} (2008), no.~3, 215--222.
{\tt arXiv:0711.0609}

\bibitem{Gastao0}
G. S. F. Frederico\ and\ D. F. M. Torres,
Fractional optimal control in the sense of Caputo
and the fractional Noether's theorem,
Int. Math. Forum {\bf 3} (2008), no.~9-12, 479--493.
{\tt arXiv:0712.1844}

\bibitem{Gastao}
G. S. F. Frederico\ and\ D. F. M. Torres,
Fractional Noether's theorem in the Riesz-Caputo sense,
Appl. Math. Comput. {\bf 217} (2010), no.~3, 1023--1033.
{\tt arXiv:1001.4507}

\bibitem{Gregory}
J. Gregory,
Generalizing variational theory to include
the indefinite integral, higher derivatives,
and a variety of means as cost variables,
Methods Appl. Anal. {\bf 15} (2008), no.~4, 427--435.

\bibitem{Jelicic}
Z. D. Jelicic\ and\ N. Petrovacki,
Optimality conditions and a solution scheme
for fractional optimal control problems,
Struct. Multidiscip. Optim. {\bf 38} (2009), no.~6, 571--581.

\bibitem{Kilbas}
A. A. Kilbas, H. M. Srivastava\ and\ J. J. Trujillo,
{\it Theory and applications of fractional differential equations},
North-Holland Mathematics Studies, 204, Elsevier, Amsterdam, 2006.

\bibitem{MOMA09}
A. B. Malinowska\ and\ D. F. M. Torres,
Delta-nabla isoperimetric problems,
Int. J. Open Probl. Comput. Sci. Math. {\bf 3} (2010), no.~4, 124--137.
{\tt arXiv:1010.2956}

\bibitem{Malinowska}
A. B. Malinowska\ and\ D. F. M. Torres,
Generalized natural boundary conditions for fractional
variational problems in terms of the Caputo derivative,
Comput. Math. Appl. {\bf 59} (2010), no.~9, 3110--3116.
{\tt arXiv:1002.3790}

\bibitem{natorres}
N. Martins\ and\ D. F. M. Torres,
Calculus of variations on time scales with nabla derivatives,
Nonlinear Anal. {\bf 71} (2009), no.~12, e763--e773.
{\tt arXiv:0807.2596}

\bibitem{Nat}
N. Martins\ and\ D. F. M. Torres,
Generalizing the variational theory on time scales
to include the delta indefinite integral, submitted.

\bibitem{Miller}
K. S. Miller\ and\ B. Ross,
{\it An introduction to the fractional calculus
and fractional differential equations},
A Wiley-Interscience Publication, Wiley, New York, 1993.

\bibitem{MyID:163}
D. Mozyrska\ and\ D. F. M. Torres,
Minimal modified energy control for fractional
linear control systems with the Caputo derivative,
Carpathian J. Math. {\bf 26} (2010), no.~2, 210--221.
{\tt arXiv:1004.3113}

\bibitem{MyID:181}
D. Mozyrska\ and\ D. F. M. Torres,
Modified optimal energy and initial memory
of fractional continuous-time linear systems,
Signal Process. {\bf 91} (2011), no.~3, 379--385.
{\tt arXiv:1007.3946}

\bibitem{MyID:207}
T. Odzijewicz, A. B. Malinowska\ and\ D. F. M. Torres,
Fractional variational calculus with classical and combined Caputo derivatives,
Nonlinear Anal. (2011), in press.
DOI: 10.1016/j.na.2011.01.010
{\tt arXiv:1101.2932}

\bibitem{MyID:203}
T. Odzijewicz\ and\ D. F. M. Torres,
Fractional calculus of variations for double integrals,
Balkan J. Geom. Appl. {\bf 16} (2011), in press.
{\tt arXiv:1102.1337}

\bibitem{MR0166037}
L. S. Pontryagin, V. G. Boltyanskii, R. V. Gamkrelidze\ and\ E. F. Mishchenko,
{\it The mathematical theory of optimal processes},
Translated from the Russian by K. N. Trirogoff;
edited by L. W. Neustadt Interscience Publishers
John Wiley \& Sons, Inc.\, New York, 1962.

\bibitem{CD:Riewe:1996}
F. Riewe,
Nonconservative Lagrangian and Hamiltonian mechanics,
Phys. Rev. E (3) {\bf 53} (1996), no.~2, 1890--1899.

\bibitem{CD:Riewe:1997}
F. Riewe,
Mechanics with fractional derivatives,
Phys. Rev. E (3) {\bf 55} (1997), no.~3, part B, 3581--3592.

\bibitem{samko}
S. G. Samko, A. A. Kilbas\ and\ O. I. Marichev,
{\it Fractional integrals and derivatives},
Translated from the 1987 Russian original,
Gordon and Breach, Yverdon, 1993.

\bibitem{Tricaud}
C. Tricaud\ and\ Y. Chen,
An approximate method for numerically solving fractional
order optimal control problems of general form,
Comput. Math. Appl. {\bf 59} (2010), no.~5, 1644--1655.

\bibitem{Brunt}
B. van Brunt,
{\it The calculus of variations},
Universitext, Springer, New York, 2004.

\end{thebibliography}
\end{document}